\documentclass [12pt] {article}
\usepackage[dvipdfm]{graphicx}
\usepackage{amsfonts}
\usepackage{amsmath}
\usepackage{amssymb}
\usepackage{bm}
\usepackage[abbrev,citation-order]{amsrefs}
\usepackage{fullpage,theorem}
\usepackage{tensor}
\usepackage{upgreek}
\usepackage{slashed}

%\addtolength{\hoffset}{-0.5cm}
%\addtolength{\textwidth}{1cm}

% THEOREMS -------------------------------------------------------
\newtheorem{thm}{Theorem}[section]
\newtheorem{cor}[thm]{Corollary}
\newtheorem{lem}[thm]{Lemma}
\newtheorem{prop}[thm]{Proposition}

\theorembodyfont{\rmfamily}
\newtheorem{defn}[thm]{Definition}

\newtheorem{rem}[thm]{Remark}

\numberwithin{equation}{section}
\newenvironment{proof}{\noindent \emph{Proof.}}{\hspace{\stretch{1}}$\Box$}

% COMMANDS -------------------------------------------------------

\newcommand{\parderv}[2] {\frac{\partial#1}{\partial#2}}
\newcommand{\dd} {\mathrm{d}}
\newcommand{\ii} {\mathrm{i}}

\newcommand{\tr} {\mathop{\mathrm{tr}}}
\newcommand{\Dop} {\mathop{\slashed{\mathrm{D}}}}
\newcommand{\D} {\mathcal{D}}

\newcommand{\End} {\mathop{\mathrm{End}}}
\newcommand{\Hom} {\mathop{\mathrm{Hom}}}

\newcommand{\SO} {\mathop{\mathrm{SO}}}

\newcommand{\C} {\mathbb{C}}
\newcommand{\ind} {\indices}

\newcommand{\lb} [1] {\left[ #1 \right. }

\newcommand{\rb} [1] {\left. #1 \right] }
\newcommand{\orc}[1] {\mathring{#1}}

\begin{document}

\title{Killing-Yano tensors and multi--hermitian structures}
\author{Lionel Mason \& Arman Taghavi-Chabert\\ The Mathematical
  Institute, 24-29 St Giles, Oxford OX1 3LB}

\date{}
\maketitle

\begin{abstract}
  We show that the Euclidean Kerr-NUT-(A)dS metric in $2m$ dimensions
  locally admits $2^m$ hermitian complex structures. These are derived
  from the existence of a non-degenerate closed conformal Killing-Yano
  tensor with distinct eigenvalues. More generally, a conformal
  Killing-Yano tensor, provided its exterior derivative satisfies a
  certain condition, algebraically determines $2^m$ almost complex
  structures that turn out to be integrable as a consequence of the
  conformal Killing-Yano equations.  In the complexification, these
  lead to $2^m$ maximal isotropic foliations of the manifold and, in
  Lorentz signature, these lead to two congruences of null geodesics.
  These are not shear-free, but satisfy a weaker condition that also
  generalizes the shear-free condition from four dimensions to
  higher dimensions. In odd dimensions, a conformal Killing-Yano
  tensor leads to similar integrable distributions in the
  complexification.  We show that the recently discovered
  5-dimensional solution of L\"{u}, Mei and Pope also admits such
  integrable distributions, although this does not quite fit into the story as
  the obvious associated two-form is not conformal Killing-Yano.  We
  give conditions on the Weyl curvature tensor imposed by the
  existence of a non-degenerate conformal Killing-Yano tensor; these
  give an appropriate generalization of the type D condition on a Weyl
  tensor from four dimensions.
\end{abstract}

\section{Introduction}
In the construction of exact solutions to the Einstein equations in
four dimensions, a prominent role is played by shear-free congruences
of null geodesics. In vacuum, these lead, via the Goldberg-Sachs
theorem, to the algebraic degeneracy of the Weyl tensor and
considerable simplification of the gravitational field equations.  The
Kerr-Newman black-hole solutions \cites{Kerr1963,Debney1969} has degenerate Weyl tensor of type D
and such solutions are particularly well endowed in the sense that
they admit two such congruences. In higher dimensions, the Kerr-Schild and Kerr-NUT-(A)dS solutions
of \cites{Myers1986,Hawking1999,Gibbons2005,Chen2008} do have preferred null congruences, but
they are not shear-free.  In \cite{Hughston1988} it was proposed that
the appropriate higher dimensional concept to extend the
$4$-dimensional results should be that of an integrable complex
distribution $\D\subset T_\C M$, $[\D,\D]\subset \D$ that is totally
null and of maximal dimension.  With this definition, a number of
$4$-dimensional results were generalized to arbitrary dimension.  In
Euclidean signature in even dimensions, this is simply a metric
compatible complex structure, i.e., a Hermitian structure.  In Lorentz
signature, $\D\cap \overline \D $ is necessarily one-dimensional and
defines a null congruence.  This is automatically shear-free in four
dimensions, but not in higher dimensions, but it Lie derives a complex
structure on the tangent space orthogonal and transverse to the null
congruence.  In this paper we show that it is these null congruences
that are relevant in the study of the higher-dimensional
Kerr-NUT-(A)dS solutions.

In four dimensions the type D condition on the Weyl curvature of a
vacuum space-time is equivalent to the condition that it admits two
distinct geodesic shear-free congruences.  It is also equivalent to
the existence of a \emph{conformal Killing-Yano tensor}, a $2$-form
$\bm{\phi}$ that, on a general $n$-dimensionial manifold, satisfies
 \begin{align*}
    \nabla _{\bm{X}} \bm{\phi} & = \frac{1}{3} \bm{X} \lrcorner
    \bm{\tau}
    + \frac{1}{n-1} \bm{X}^* \wedge \bm{K},
  \end{align*}
  for all vector fields $\bm{X}$, where $\bm{\tau}$, a $3$-form and
  $\bm{K}$, a $1$-form, are determined by the equation.  Such a form
  is said to be a {\em Killing $2$-form} if $\bm{K}\equiv0$, and a {\em
    $*$-Killing $2$-form} if $\bm{\tau} \equiv 0$.  In four dimensions
  Killing $2$-forms are mapped onto $*$-Killing 2-forms by Hodge
  duality, but in general dimension, the two concepts are distinct.
  Killing-Yano tensors and their generalisation to any $p$-forms were
  first introduced by Kentaro Yano as a natural generalisation of
  Killing vectors to forms in \cites{Yano1952, Yano1952a}. Conformal
  Killing-Yano tensors as a generalisation of conformal Killing
  vectors made their first appearance in \cites{Tachibana1969,
    Kashiwada1968}, and are often refered to as conformal Killing
  forms or twistor forms.

  Killing-Yano tensors underly much of the theory of the
  four-dimensional black hole solutions. In \cite{Carter1968}, Brandon
  Carter identifies the fourth conserved quantity in the Kerr-Newman
  black hole solution, which allows the separation of the
  Hamilton-Jacobi equations and the complete integrability of geodesic
  motions. In \cites{Walker1970, Hughston1972} it is shown that this
  `hidden' symmetry can be represented by a Killing tensor, which
  turns out to be the `square' of a conformal Killing $2$-form. In the
  same papers, a spinorial approach to the problem sheds light on the
  null geodesic shear-free congruences in the Kerr geometry: in tensor
  language, the real eigenvectors of the conformal Killing-Yano tensor
  define a pair of geodesic shear-free null congruences.

More recently, similar structures have been found for the black hole
solutions in higher dimensions.  These have been the 
object of intensive study motivated to a large extent by ideas from
string theory and M-theory.  The Kerr-NUT-(A)dS
metric is  a higher-dimensional generalisation of the Kerr
metric, generalising also the Pleba\'{n}ski-Demia\'{n}ski metric. Explicitly,
in Euclideanised form,
the $n$-dimensional Kerr-NUT-(A)dS metric is given by \cite{Chen2006}
\begin{align*}
  \bm{g} = \sum _{\mu=1}^{m} \left( \bm{e} \ind{^\mu} \odot \bm{e}
    \ind{^\mu} + \bm{e}\ind{^{m+\mu}} \odot \bm{e} 
  \ind{^{m+\mu}} \right) + \epsilon \bm{e}  \ind{^{2m+1}} \odot \bm{e}
\ind{^{2m+1}} 
\end{align*}
where, in terms of local coordinates $\left\{ x_\mu, \psi_k \right\}$,
\begin{align*}
  \bm{e} ^\mu & =  \left( \frac{U_\mu}{X_\mu}\right)^{1/2} \dd x_\mu, &
  \bm{e} ^{m + \mu} & =  \left( \frac{X_\mu}{U_\mu}\right) ^{1/2}
  \sum_{k=0} ^{m-1} A_\mu ^{(k)} \dd 
  \psi _k, &  \bm{e} ^{2m+1} & =
  \left(-\frac{c}{A^{(m)}}\right)^{1/2}\left(\sum_{k=0} ^m A^{(k)} \dd
    \psi _k \right). 
\end{align*}
with
\begin{align*}
X_\mu  = (-1)^\epsilon \frac{(g^2 x_\mu ^2 - 1)}{x_\mu ^{2 \epsilon}} \prod_{k=1} ^{m-1+\epsilon} (a_k ^2 - x_\mu ^2 )
+ 2 M_\mu (-x_\mu)^{1-\epsilon}, \qquad U_\mu  = \prod_{\substack{\nu=1\\\nu \neq \mu}} ^{m}
(x_\nu^2 -
x_\mu^2), \\
c = \prod _{k=1} ^m a_k ^2, \qquad
A_\mu ^{(k)}  =
\sum_{\substack{\nu_1 < \nu_2 < \ldots < \nu_k \\ \nu_i \neq \mu}}  x_{\nu_1}^2 x_{\nu_2}^2
\ldots x_{\nu_k}^2, \qquad A ^{(k)}  =
\sum_{\nu_1 < \nu_2 < \ldots < \nu_k}  x_{\nu_1}^2 x_{\nu_2}^2
\ldots x_{\nu_k}^2.
\end{align*}
Here, $m=[n/2 ]$, and $\epsilon = n - 2m$. The constants $a_k$,
$-\ii^{1+\epsilon}M_m$, $M_\mu$ ($\mu \neq m$) are the rotation
coefficients, the mass and the NUT parameters respectively, and
$\lambda=-g^2$ is proportional to the cosmological constant. (With
appropriate choices of the constants, Lorentzian real slices can also
be found.)  Like its
four-dimensional counterpart, the Kerr-NUT-(A)dS metric admits a closed
conformal Killing-Yano tensor
\begin{align} \label{TwKNA}
        \bm{\phi} = \sum x_\mu \bm{e}^\mu \wedge \bm{e}^{m+\mu}.
\end{align}
Aspects of the four-dimensional theory have been generalised to
Kerr-NUT-(A)dS metric in arbitrary dimensions in a series of papers
\cites{Page2007,Frolov2007,Frolov2007a,Krtouvs2007,Krtouvs2007a,
  Kubizvn'ak2007,Kubizvn'ak2007a,Sergyeyev2008, 
  Frolov2008,Hamamoto2007,Houri2007,Houri2008,Krtous2008,Oota2008}
in 
which the separation of the Hamilton-Jacobi, Klein-Gordon and Dirac
equations and the complete 
integrability of geodesic motions are dealt with. In this paper, we
turn our attention to the existence of a set of integrable almost
complex structures. Defining
\begin{align*}
  \bm{\theta} ^\mu = 2^{-1/2} (\bm{e} ^\mu + \ii \bm{e} ^{m+\mu}) \qquad \mbox{and} \qquad \bm{\bar{\theta}} ^{\bar{\mu}} = 2^{-1/2} (\bm{e} ^\mu - \ii
  \bm{e}
  ^{m+\mu})
\end{align*}
puts the Kerr-NUT-(A)dS metric into the form
\begin{align*}
  \bm{g} = \sum_\mu 2 \bm{\theta} \ind{^\mu} \odot \bm{\bar{\theta}}
  \ind{^{\bar{\mu}}} + \epsilon \bm{e} \ind{^{2m+1}} \odot \bm{e} \ind{^{2m+1}}.
\end{align*}
A straightforward computation of the Levi-Civita connection $1$-form
implies the integrability of each of the $2^m$ distributions defined as
the annihilator of a set of $m$ $1$-forms obtained by choosing one
from each pair $\left\{ \bm{\theta}^\mu, \bm{\bar{\theta}}^{\bar{\mu}}
\right\}$, $\mu=1,\ldots m$. These correspond to integrable almost
complex structures in the case $\epsilon=0$, and to integrable CR
structures in the case $\epsilon=1$.  These results are essentially
local in nature and although the complex structures will be defined on
a dense open set, they will not generally extend over the whole of the
regular space-time (thus they will not be global on $S^4$ or $S^6$).

The plan of the paper is as follows. We first recall the basic facts
concerning conformal Killing-Yano tensors and maximal isotropic
distributions while establishing the notation. We then prove our main
result on integrability, both in even and odd dimensions and discuss
the examples above in more detail.  In these examples the Killing Yano
tensor is closed.  We also study the example of the new
$5$-dimensional metrics discovered by L\"{u}, Mei and Pope \cites{Lu2009}
which we show does admit the corresponding integrable distribution,
although the most obvious choice for a conformal Killing-Yano tensor
does not seem to work.  

We go on to show how a Killing-Yano tensor imposes
algebraic restrictions on the Weyl tensor. We also study the closely
related structure of Hamiltonian $2$-forms and show that these also
lead to a family of integrable complex structures as for Killing-Yano
tensors.  In the last section, we re-express our results in terms of
spinors.  In particular, eigenspinors of the conformal Killing-Yano
tensor are shown to be pure and to determine the integrable
distributions discussed earlier. Finally, we briefly discuss further
issues arising from our discussion, the different possible reality
structure, the Kerr-Schild form of the metrics, the Kerr theorem,
degenerate Killing-Yano tensors and Killing spinors

\section{Preliminaries}
By and large we will not use the Einstein summation convention, but
will occasionally when there is no ambiguity and will warn the reader
of this. We adopt the notation that round and square brackets
enclosing a group of indices denote symmetrisation and
anti-symmetrisation respectively, e.g.
\begin{align*}
        k \ind{_{(a b)}} = \frac{1}{2!}\left( k \ind{_{a b}} + k \ind{_{b a}} \right)  \qquad \mbox{and} \qquad
        k \ind{_{[a b c]}} = \frac{1}{3!}\left( k \ind{_{a b c}} - k \ind{_{b a c}} + k \ind{_{b c a}} - k \ind{_{c b a}} + k \ind{_{c a b}} - k \ind{_{a c b}}\right).
\end{align*}
Indices are raised and lowered via the metric.
Tensorial quantities will be given in bold symbols, and scalar quantities in regular symbols.

\subsection{Conformal Killing $2$-forms}
Conformal Killing-Yano tensors are now much studied, see
\cite{Semmelmann2001} for a thorough
treatment. We shall only state results pertinent to conformal Killing
$2$-forms. In what follows, $\bm{V}^* \equiv \bm{g}(\bm{X})$ denotes the dual of a vector $\bm{X}$, and $\dd^*$ the adjoint of the exterior derivative $\dd$. On $p$-forms on an $n$-dimensional (pseudo-) Riemannian it is given by $\dd^* = (-1)^{n p+n+1} * \dd *$, where the $*$ is the Hodge duality operator.

\begin{defn}
  A \emph{conformal Killing-Yano tensor} or \emph{conformal Killing $2$-form} on an $n$-dimensional (pseudo-) Riemannian manifold $M$ is a $2$-form $\bm{\phi}$ which satisfies the following equation
  \begin{align} \label{CKYeq}
    \nabla _{\bm{X}} \bm{\phi} & = \frac{1}{3} \bm{X} \lrcorner
    \bm{\tau}
    + \frac{1}{n-1} \bm{X}^* \wedge \bm{K}, & \Biggl( \nabla \ind{_c} \phi \ind{_{a b}} \Biggr. & = \Biggl. \tau \ind{_{c a b}} + \frac{2}{n-1} g \ind{_{c \lb{a}}} K \ind{_{\rb{b}}} \Biggr)
  \end{align}
  for all vector fields $\bm{X}$. It follows at once $\bm{\tau} = \dd \bm{\phi}$ and $\bm{K} = -\dd^* \bm{\phi}$. If $\bm{\phi}$ is co-closed, i.e. $\bm{K}=0$, it is called a \emph{Killing $2$-form}.
  If $\bm{\phi}$ is closed, i.e. $\bm{\tau}=0$, it is called a \emph{$*$-Killing $2$-form}.
\end{defn}

Equation \eqref{CKYeq} is over-determined and one can show that in
the case $n \neq 4$ it is equivalent to a parallel section of the
bundle $\mathcal{E}^2(M) = \bigwedge ^2 T^* M \oplus \bigwedge ^3
T^*M \oplus \bigwedge ^1 T^* M \oplus \bigwedge ^2 T^* M$ with
respect to the Killing connection $\tilde{\nabla}$ as described in
\cite{Semmelmann2001}. An element $\bm{\Phi} = (\bm{\phi}, \bm{\tau},
\bm{K}, \bm{\chi}) \in \mathcal{E}^2(M)$ satisfies
$\tilde{\nabla} \bm{\Phi} = 0$ if and only if $\bm{\tau} = \dd
\bm{\phi}$, $\bm{K} = -\dd^* \bm{\phi}$, and $\bm{\chi} = \Delta
\bm{\phi}$ where $\Delta = \dd \dd^* + \dd^* \dd$ is the Beltrami-Laplacian on forms. The case $n=4$ necessitates a slight
modification of the prolongation in which Hodge duality must be
taken into account.
In flat space with flat coordinates $\left\{ x^a \right\}$,
integration leads to 
\begin{align} \label{flatsol}
  \bm{\phi} & = \left( \frac{1}{2}
\| \bm{x} \|^2 \orc{\bm{\chi}} - \bm{x}^* \wedge \bm{x}
\lrcorner \orc{\bm{\chi}} \right) + \bm{x}^* \wedge
{\orc{\bm{K}}} + \bm{x} \lrcorner {\orc{\bm{\tau}}} +
{\orc{\bm{\phi}}},
\end{align}
where $\bm{x} = \left(x^1, x^2, \ldots, x^n \right)$ is the position
vector field, and $ \orc{\bm{\chi}}$, ${\orc{\bm{K}}}$ ,
${\orc{\bm{\tau}}}$ and ${\orc{\bm{\phi}}}$ are constants. 

\subsection{Maximal isotropic foliations and null congruences}
We will be concerned with integrable distributions $\D\subset T_\C M$
that are maximal and isotropic, i.e., in $2m$ dimensions, $\D$ will be
$m$-dimensional and the metric vanishes on restriction to $\D$, i.e.,
for $\bm{V},\bm{W}\in\D$, $\bm{g}(\bm{V},\bm{W})=0$ and the integrability being given by the
Frobenius integrability condition $[\D,\D]\subset \D$.  It is always
possible to find a frame of 1-forms $\{\bm{\theta}^a \} = \{ \bm{\theta}^\mu,\bm{\theta}_\mu \}$,
($a=1,\ldots,2m$; $\mu=1,\ldots m$) such that $\D$ is the annihilator of the $\bm{\theta}_\mu$
and $\bm{g}= \sum_\mu \bm{\theta}^\mu\odot\bm{\theta}_\mu$.  

In Euclidean signature, $\D\cap\bar\D=\{0\}$ because there are no real
non-zero null vectors, and so such distributions correspond to complex
structures with respect to which the metric is Hermitian, i.e., we can
choose $\bm{\theta}^\mu=\overline{\bm{\theta}_\mu}$.

In Lorentz signature, $\D\cap\bar\D$ is $1$-dimensional because, on the
one hand, there are no linear subspaces of a Lorentzian lightcone of
dimension greater than one, and on the other, if $\D\cap
\bar\D=\{0\}$, $\D$ would be a complex structure for which the metric
is Hermitian, but such metrics must have an even number of positive
and negative eigenvalues over the reals, whereas in Lorentz signature
there is just one positive eigenvalue.  

We have the following lemma
\begin{lem}\label{total-geodesic}
  Suppose that the maximal isotropic distribution $\D$ is integrable,
  $[\D,\D]\subset \D$, then, supposing the space-time to be analytic,
  the integral surfaces of $\D$ in the complexification are totally
  geodesic.
\end{lem}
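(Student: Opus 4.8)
The plan is to show that each integral leaf $N$ of $\D$ is totally geodesic by verifying directly that $\nabla_{\bm{V}}\bm{W}$ is again tangent to $N$ whenever $\bm{V},\bm{W}$ are --- equivalently, that the second fundamental form, the $\D$-transverse part of $\nabla_{\bm{V}}\bm{W}$, vanishes. The key structural fact is that a maximal isotropic distribution in $2m$ dimensions satisfies $\D^{\perp}=\D$: it has rank $m$, it is contained in $\D^{\perp}$ because it is isotropic, and $\D^{\perp}$ also has rank $2m-m=m$. Consequently it suffices to prove that $\bm{g}(\nabla_{\bm{V}}\bm{W},\bm{Z})=0$ for all sections $\bm{V},\bm{W},\bm{Z}$ of $\D$, for then $\nabla_{\bm{V}}\bm{W}\in\D^{\perp}=\D$. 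The analyticity hypothesis enters only to set the stage: in the complexification the metric is holomorphic, the involutive holomorphic distribution $\D$ integrates to a holomorphic foliation by the holomorphic Frobenius theorem, and the Levi-Civita connection together with its Koszul formula extend holomorphically, so the notions of geodesic, second fundamental form and ``totally geodesic'' make sense verbatim.

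The heart of the argument is the Koszul formula, evaluated on sections $\bm{V},\bm{W},\bm{Z}$ of $\D$:
\begin{align*}
  2\,\bm{g}(\nabla_{\bm{V}}\bm{W},\bm{Z}) &= \bm{V}\,\bm{g}(\bm{W},\bm{Z}) + \bm{W}\,\bm{g}(\bm{V},\bm{Z}) - \bm{Z}\,\bm{g}(\bm{V},\bm{W}) \\
  &\quad{}+ \bm{g}([\bm{V},\bm{W}],\bm{Z}) - \bm{g}([\bm{V},\bm{Z}],\bm{W}) - \bm{g}([\bm{W},\bm{Z}],\bm{V}).
\end{align*}
The three terms on the first line vanish because $\D$ is isotropic, so $\bm{g}(\bm{W},\bm{Z})=\bm{g}(\bm{V},\bm{Z})=\bm{g}(\bm{V},\bm{W})\equiv 0$; the three terms on the second line vanish because integrability, $[\D,\D]\subset\D$, makes $[\bm{V},\bm{W}]$, $[\bm{V},\bm{Z}]$ and $[\bm{W},\bm{Z}]$ sections of $\D$, on which $\bm{g}$ restricts to zero. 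Hence $\bm{g}(\nabla_{\bm{V}}\bm{W},\bm{Z})=0$ for all $\bm{Z}\in\D$, so $\nabla_{\bm{V}}\bm{W}\in\D^{\perp}=\D$, and the leaves are totally geodesic.

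I do not expect a genuine obstacle here: the content is simply that for a maximal isotropic distribution, isotropy annihilates the metric-derivative terms and involutivity annihilates the Lie-bracket terms of the Koszul formula. The only point deserving care is the passage to the complexification --- confirming that the holomorphic Frobenius theorem applies to $\D$ and that the holomorphic Levi-Civita connection (whose metric has no definite signature) still defines geodesics and a second fundamental form in the usual way --- which is precisely why the hypothesis of analyticity is imposed. One should also note that the same computation only yields $\nabla_{\bm{V}}\bm{W}\in\D^{\perp}$ when $\D$ is of strictly submaximal rank, so the identification $\D^{\perp}=\D$ from maximality is what makes the conclusion sharp.
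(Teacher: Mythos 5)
Your proof is correct and is essentially the paper's argument in invariant form: the Koszul-formula computation, where isotropy kills the derivative terms and involutivity kills the bracket terms, is exactly the paper's statement that the adapted-frame connection components $\Gamma_{\mu\nu\lambda}$ vanish, and your appeal to $\D^{\perp}=\D$ plays the role of the paper's observation that the annihilating form $\bm{\theta}_1\wedge\ldots\wedge\bm{\theta}_m$ is parallel up to scale along $\D$. Your handling of analyticity via the holomorphic Frobenius theorem and the holomorphic Levi-Civita connection matches the intent of the paper's hypothesis, so nothing is missing.
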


\begin{proof}
This can be seen as follows. Introduce a basis $\{\bm{V}_a\} =\{\bm{V}_\mu,\bm{V}^\mu\}$
of vector field dual to $\{\bm{\theta}^a\}=\{\bm{\theta}^\mu, \bm{\theta}_\mu\}$ where $\bm{V}_\mu$ spans
$\D$.  The Ricci rotation coefficients $\omega \ind{_{\mu \nu \lambda}}$,
$\omega \ind{_{\mu\nu}^\lambda}$, etc..., will
be given by $[\bm{V}_\mu,\bm{V}_\nu]= \sum_\lambda ( \omega \ind{_{\mu \nu \lambda}} \bm{V}^\lambda +
\omega \ind{_{\mu \nu}^\lambda} \bm{V}_\lambda)$. The integrability condition
$[\bm{V}_\mu,\bm{V}_\nu]\in\D$ implies that the Ricci rotation coefficients
$\omega_{\mu\nu\lambda}=0$ and so also the corresponding connection
coefficients $\Gamma_{\mu \nu \lambda}=0$ where $\nabla_{\bm{V}_\mu}
\bm{V}_\nu=\sum_\lambda (\Gamma \ind{_{\mu\nu} ^\lambda} \bm{V}_\lambda + \Gamma_{\mu\nu\lambda} \bm{V}^\lambda)$.
This allows one to deduce that the form 
$\bm{\theta}_1 \wedge \ldots \wedge \bm{\theta}_m$ ,
which is orthogonal to all the
$\bm{V}_\mu$, is parallel up scale along the $\bm{V}_\mu$, i.e.
\begin{align*}
\nabla_{\bm{V}_\mu} \bm{ \theta}_1 \wedge \ldots \wedge \bm{\theta} _m =
\sum_\nu \Gamma \ind{_{\mu\nu}^\nu} \bm{\theta}_1 \wedge \ldots \wedge \bm{\theta}_m.
\end{align*}  
Thus the integral surfaces of $\D$ are totally geodesic.
\end{proof}

We have the straightforward corollary
\begin{cor}
  In Lorentzian signature, the null congruence defined by
  $\D\cap\bar\D$ is geodesic.
\end{cor}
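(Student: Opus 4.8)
The plan is to read this off from Lemma~\ref{total-geodesic} together with its complex conjugate. In Lorentz signature $\D\cap\bar\D$ is one-dimensional, and since it is manifestly stable under complex conjugation it is spanned by a real vector field $\bm{k}$; as $\D$ is isotropic, $\bm{k}$ is null, and its integral curves constitute the congruence in the statement. It therefore suffices to show $\nabla_{\bm{k}}\bm{k}\in\sspan\{\bm{k}\}$, which is exactly the condition for the integral curves of $\bm{k}$ to be (unparametrised) geodesics.

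First I would isolate from the proof of Lemma~\ref{total-geodesic} its purely pointwise content: the integrability $[\D,\D]\subset\D$ forces the connection coefficients $\Gamma_{\mu\nu\lambda}$ to vanish, i.e.\ $\nabla_{\bm{V}}\bm{W}\in\D$ for all sections $\bm{V},\bm{W}$ of $\D$. No analyticity assumption is needed for this algebraic step; analyticity entered the lemma only in order to speak of the integral surfaces in the complexification. Because $\bm{g}$ and the Levi-Civita connection $\nabla$ are real in Lorentz signature, applying complex conjugation to this statement gives the corresponding property for $\bar\D$, namely $\nabla_{\bm{V}}\bm{W}\in\bar\D$ for all sections $\bm{V},\bm{W}$ of $\bar\D$.

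Finally I would observe that, since $\bm{k}$ is a section of both $\D$ and $\bar\D$, these two auto-parallel properties give $\nabla_{\bm{k}}\bm{k}\in\D$ and $\nabla_{\bm{k}}\bm{k}\in\bar\D$, hence $\nabla_{\bm{k}}\bm{k}\in\D\cap\bar\D=\sspan\{\bm{k}\}$, so that $\nabla_{\bm{k}}\bm{k}=f\,\bm{k}$ for some real function $f$; a reparametrisation of the integral curves then puts this into affinely parametrised geodesic form. I do not anticipate any genuine obstacle, as the corollary is a two-line consequence of the lemma; the only point deserving a moment's care is the passage to the complex conjugate of the lemma, and this can in any case be bypassed by choosing the real frame so that $\bm{\theta}^\mu=\overline{\bm{\theta}_\mu}$ and $\bm{k}$ is one of the real frame vectors, whereupon the vanishing of $\Gamma_{\mu\nu\lambda}$ and of its conjugate directly yields $\nabla_{\bm{k}}\bm{k}\parallel\bm{k}$.
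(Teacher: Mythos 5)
Your proposal is correct and follows essentially the same route the paper intends: the corollary is read off from Lemma~\ref{total-geodesic} (and its conjugate), using that integrability forces $\Gamma_{\mu\nu\lambda}=0$, i.e.\ $\D$ and hence $\bar\D$ are auto-parallel, so $\nabla_{\bm{k}}\bm{k}\in\D\cap\bar\D=\sspan\{\bm{k}\}$. Your remarks that analyticity is not needed for this pointwise step and that the real spanning vector $\bm{k}$ exists because $\D\cap\bar\D$ is conjugation-invariant are accurate refinements, not departures from the paper's argument.
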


\subsection{The normal form of a generic 2-form}
Throughout this paper we will restrict attention to the case where the
Killing-Yano tensor $\bm{\phi}$ is generic in the sense that all its eigenvalues
will be assumed to be distinct (i.e., the eigenvalues of the
endomorphism obtained by raising one index with the metric).  The
following is a standard result and we only sketch its proof briefly to
set notation.

\begin{lem}\label{normform}
There exists a basis of 1-forms
$\{ \bm{\theta}^a\}=\{ \bm{\theta}^\mu, \bm{\theta}_\mu, \epsilon \bm{e}_{2m+1} \}$ 
that are a null basis for the metric, i.e.,
$\bm{g} = \sum_\mu
\bm{\theta}^\mu \odot \bm{\theta}_\mu + \epsilon \bm{e}_{2m+1}^2$ (so that each of the 1--forms
$\{\bm{\theta}^\mu, \bm{\theta}_\mu \}$ is null) 
and such that 
\begin{equation}\label{normal-form}
\bm{\phi}=\sum_\mu \lambda_\mu
\bm{\theta}^\mu\wedge \bm{\theta}_\mu \, .
\end{equation}
This normal form is unique up to separate rescalings of the
$(\bm{\theta}^\mu,\bm{\theta}_\mu) \rightarrow
(a_\mu \bm{\theta}^\mu, a_\mu^{-1}\bm{\theta}_\mu)$ with $a_\mu\neq 0$, and up to
permutations of the $\mu$ and $(\bm{\theta}^\mu, \bm{\theta}_\mu) \rightarrow
(\bm{\theta}_\mu, \bm{\theta}^\mu)$ for one value of $\mu$, with the other forms
left invariant.

[Here as before, $\epsilon=1$ in the odd dimensional case and zero otherwise.]
\end{lem}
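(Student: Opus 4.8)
The plan is to treat this as the pointwise normal-form theorem for a skew-symmetric endomorphism relative to a non-degenerate symmetric bilinear form, and then to note that the construction varies smoothly over the open set where $\bm{\phi}$ is generic. First I would raise an index of $\bm{\phi}$ with $\bm{g}$ to obtain the endomorphism $\Phi$ defined by $\bm{\phi}(\bm{X},\bm{Y})=\bm{g}(\Phi\bm{X},\bm{Y})$; this $\Phi$ is skew-adjoint for $\bm{g}$. Complexifying $TM$, I would study the self-adjoint endomorphism $\Phi^2$. The key algebraic fact is that skew-adjointness forces the generalised $\alpha$-eigenspace $V_\alpha$ of $\Phi$ to be $\bm{g}$-orthogonal to $V_\beta$ unless $\alpha+\beta=0$, so non-degeneracy of $\bm{g}$ makes $V_\alpha$ pair perfectly with $V_{-\alpha}$ and the spectrum of $\Phi$ invariant under $\alpha\mapsto-\alpha$ with equal multiplicities. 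The genericity hypothesis says these eigenvalues are all simple, so the spectrum consists of $\pm\lambda_1,\dots,\pm\lambda_m$ with $\lambda_\mu\neq 0$ and the $\lambda_\mu^2$ distinct, together with a simple eigenvalue $0$ exactly when $n$ is odd (a repeated zero being excluded, whence $\dim\ker\Phi=\epsilon=n-2m$). Hence $\Phi$ is diagonalisable over $\C$, and, putting $W_\mu:=\ker(\Phi^2-\lambda_\mu^2\,\mathrm{id})$, one obtains a $\bm{g}$-orthogonal splitting $TM\otimes\C=\bigoplus_{\mu=1}^m W_\mu\oplus\ker\Phi$ into $\Phi$-invariant summands, each $W_\mu$ two-dimensional with $\Phi|_{W_\mu}$ having the two distinct eigenvalues $\pm\lambda_\mu$.

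Next I would work within a single $W_\mu$. Pick eigenvectors of $\Phi$ with eigenvalues $\lambda_\mu$ and $-\lambda_\mu$; since $\Phi$ is skew-adjoint and $\lambda_\mu\neq0$ each of these is $\bm{g}$-null, and since $\bm{g}$ restricts non-degenerately to $W_\mu$ (the splitting being orthogonal) the two have non-zero mutual $\bm{g}$-product, which I would normalise by rescaling one of them. Taking the dual coframe $\{\bm{\theta}^\mu,\bm{\theta}_\mu\}$ (extended by zero off $W_\mu$) then yields $\bm{g}|_{W_\mu}=\bm{\theta}^\mu\odot\bm{\theta}_\mu$, and reading off the action of $\Phi$ gives $\bm{\phi}|_{W_\mu}=\lambda_\mu\,\bm{\theta}^\mu\wedge\bm{\theta}_\mu$. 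When $\epsilon=1$ I would likewise pick a generator of the one-dimensional $\ker\Phi$, on which $\bm{g}$ restricts non-degenerately and $\bm{\phi}$ vanishes, and normalise it to a unit $1$-form $\bm{e}_{2m+1}$. As the summands $W_\mu$ and $\ker\Phi$ are mutually $\bm{g}$-orthogonal, assembling these pieces produces the asserted null coframe with $\bm{g}=\sum_\mu\bm{\theta}^\mu\odot\bm{\theta}_\mu+\epsilon\,\bm{e}_{2m+1}^2$ and $\bm{\phi}=\sum_\mu\lambda_\mu\,\bm{\theta}^\mu\wedge\bm{\theta}_\mu$.

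For the uniqueness clause I would note that $\Phi^2$ is constructed invariantly from $\bm{\phi}$ and $\bm{g}$, so its eigenspaces $W_\mu$ and $\ker\Phi$ are canonical, and so is the pair of $\Phi$-eigenlines inside each $W_\mu$. The residual freedoms are exactly: interchanging the two eigenlines of a given $W_\mu$ --- this is the substitution $(\bm{\theta}^\mu,\bm{\theta}_\mu)\to(\bm{\theta}_\mu,\bm{\theta}^\mu)$ and replaces $\lambda_\mu$ by $-\lambda_\mu$ in the normal form; rescaling a chosen eigencovector, $\bm{\theta}^\mu\to a_\mu\bm{\theta}^\mu$ with $a_\mu\neq0$, which then forces $\bm{\theta}_\mu\to a_\mu^{-1}\bm{\theta}_\mu$ by the normalisation $\bm{g}|_{W_\mu}=\bm{\theta}^\mu\odot\bm{\theta}_\mu$; and relabelling the index $\mu$, i.e.\ permutations; with $\bm{e}_{2m+1}$ fixed up to sign in the odd case. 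This is precisely the stated ambiguity.

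The computations are all routine; the one point that needs care is the need to complexify before carrying out the spectral decomposition and to keep track of the induced $\alpha\mapsto-\alpha$ pairing of eigenvalues. In indefinite signature the $\lambda_\mu$ and the eigencovectors need not be real, but the argument runs uniformly over $\C$, and the even and odd cases are handled simultaneously by the dimension count above.
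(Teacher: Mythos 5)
Your proof is correct and follows essentially the same route as the paper's sketch: diagonalise the skew-adjoint endomorphism obtained by raising an index, note that eigenvectors with non-zero eigenvalue are null and that eigenspaces are orthogonal unless their eigenvalues sum to zero, so the spectrum pairs as $\pm\lambda_\mu$ (plus a simple zero in odd dimensions), and then normalise. The only difference is presentational — you organise the argument via the $\Phi^2$-eigenspaces and spell out the uniqueness clause, which the paper leaves implicit — but the underlying argument is the same.
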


\begin{proof}
[Sketch]
The genericity assumption allows us to use a basis of
eigen-(co)vectors for $\bm{\phi}$.  It is a standard fact that the
eigenvectors 
with non-zero eigenvalue of a 2-form with respect to a metric are
isotropic.  This follows from the identity (using the summation convention
now until the end of this section)
$$
\phi_{ab}l^b=\lambda g_{ab}l^b \qquad \Rightarrow \qquad  \lambda
l_al^a=\phi_{ab}l^al^b=0\, .
$$  
A pair of eigenvectors $l^a$, $n^a$ with eigenvalues $\lambda$, $\nu$
respectively satisfy $\phi_{ab}l^an^b=\nu l^an_a=-\lambda l^an_a$ and
so will be orthogonal to each-other unless their eigenvalues differ by
a sign.  Thus, since the eigenvectors span the space, they must come
in pairs with eigenvalues of opposite sign with possibly one zero
eigenvalue in odd dimensions, and can be normalised such
that the claims of the lemma are satisfied.
\end{proof}

We note that for a real $\bm{\phi}$ in Euclidean signature we must have
$\bm{\theta}^\mu=\overline {\bm{\theta}_\mu}$ and the $\lambda_\mu$ will all be
imaginary.  For real $\bm{\phi}$ in Lorentzian signature, we must have that
one eigenvalue, say $\lambda_1$ is real, as will therefore be
$\bm{\theta}^1$ and $\bm{\theta}_1$, with the other eigenvalues imaginary (with
$\bm{\theta}^\mu=\overline{\bm{\theta}_\mu} $).  This follows from the
requirement that the metric have just one timelike direction as before
in the discussion of maximal isotropic foliations.  

\section{Main results}
\subsection{Maximal isotropic distributions associated to $\bm\phi$
  and their integrability} 
Given the representation of $\bm{\phi}$ in Lemma \ref{normform}, we
can write down $2^m$ maximal isotropic distributions, one for each
choice of integers $\mu_1 <\mu_2 < \ldots <\mu_r \subset
\{1,\ldots,m\}$.  These are defined to be the distribution annihilated
by the forms $\{ \bm{\theta}^{\mu_1}, \ldots , \bm{\theta}^{\mu_r},
\bm{\theta}_{\nu_1},\ldots, \bm{\theta}_{\nu_{m-r}},\epsilon
\bm{e}_{2m+1} \}$ where the $\nu_1,\ldots,\nu_{m-r}$ are the distinct
integers in the complement of the $\mu_1,\ldots,\mu_r$ in
$\{1,\ldots,m\}$.  The purpose of this section is to prove the
following theorem.

\begin{thm} \label{eventhm}
  Let $M$ be a $2m$-dimensional Riemannian manifold equipped with
  a non-degenerate diagonalisable conformal Killing-Yano tensor $\bm{\phi}$
  with distinct eigenvalues. Let the $3$-form $\bm{\tau} = \dd
  \bm{\phi}$ satisfy 
\begin{align} \label{tauintcond}
	\bm{\theta}^\mu \wedge \bm{\tau} (\bm{V}^\mu, \cdot, \cdot) & = 0, &
	\bm{\theta}_\mu \wedge \bm{\tau} (\bm{V}_\mu, \cdot, \cdot) & = 0,
\end{align}
for each $\mu$ (i.e., with no summation).  Then the $2^m$ maximal
isotropic distributions associated to $\bm{\phi}$ are integrable. In
Euclidean signature they define $2^m$ distinct complex structures,
whereas in Lorentzian signature they define just two geodesic
congruences, each associated with $2^{m-1}$ integrable maximal
isotropic distributions.
\end{thm}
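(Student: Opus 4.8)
The plan is to verify the Frobenius integrability condition $[\D,\D]\subset\D$ directly by computing the relevant Ricci rotation coefficients of the null frame $\{\bm{\theta}^\mu,\bm{\theta}_\mu,\epsilon\bm{e}_{2m+1}\}$ from Lemma \ref{normform} and showing they vanish on each of the $2^m$ distributions. Fix one such distribution $\D$, say the one annihilated by $\{\bm{\theta}^{\mu_1},\ldots,\bm{\theta}^{\mu_r},\bm{\theta}_{\nu_1},\ldots,\bm{\theta}_{\nu_{m-r}},\epsilon\bm{e}_{2m+1}\}$, spanned by the dual vectors $\bm{V}_{\mu_i}$ and $\bm{V}^{\nu_j}$. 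The key input is that the conformal Killing-Yano equation \eqref{CKYeq}, fed the normal form \eqref{normal-form}, relates $\nabla\bm\phi$ to the connection coefficients: since $\bm\phi=\sum_\mu\lambda_\mu\bm{\theta}^\mu\wedge\bm{\theta}_\mu$ with distinct $\lambda_\mu$, differentiating and matching against $\frac13\bm{X}\lrcorner\bm\tau+\frac1{n-1}\bm{X}^*\wedge\bm{K}$ lets one solve for the ``off-diagonal'' connection $1$-forms in terms of $\bm\tau$, $\bm{K}$ and the derivatives $\dd\lambda_\mu$. The eigenvalue-gap factors $(\lambda_\mu\pm\lambda_\nu)$ appear as denominators, and this is exactly where the genericity (distinct eigenvalues) assumption is used so that these expressions are well defined.

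First I would record the consequences of \eqref{CKYeq} in the frame. Write $\nabla_{\bm{X}}\bm{\theta}^\mu=\sum_\nu(\omega\ind{^\mu_\nu}(\bm{X})\bm{\theta}^\nu+\omega\ind{^\mu^\nu}(\bm{X})\bm{\theta}_\nu)+\ldots$ and similarly for $\bm{\theta}_\mu$; metric-compatibility gives the usual symmetries. Then $\nabla_{\bm{X}}\bm\phi=\sum_\mu(\dd\lambda_\mu(\bm{X}))\bm{\theta}^\mu\wedge\bm{\theta}_\mu+\sum_\mu\lambda_\mu\nabla_{\bm{X}}(\bm{\theta}^\mu\wedge\bm{\theta}_\mu)$. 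Comparing the component of this in $\bm{\theta}^\mu\wedge\bm{\theta}^\nu$ (for $\mu\neq\nu$) with the corresponding component of the right-hand side of \eqref{CKYeq}, one gets a linear relation of the schematic form $(\lambda_\mu-\lambda_\nu)\,\omega^{\mu\nu}(\bm{X}) = (\text{component of }\tfrac13\bm{X}\lrcorner\bm\tau)$; the $\bm{X}^*\wedge\bm{K}$ term only contributes to components involving the metric pairing $\bm{\theta}^\mu\odot\bm{\theta}_\mu$, i.e. the ``diagonal'' part, and drops out of the genuinely off-diagonal components. Solving, the connection coefficient $\omega^{\mu\nu}(\bm{X})$ is a multiple of $\bm\tau(\cdot,\bm{V}^\mu,\bm{V}^\nu)$-type terms divided by $(\lambda_\mu-\lambda_\nu)$, and likewise $\omega\ind{_\mu_\nu}(\bm{X})\propto\bm\tau(\cdot,\bm{V}_\mu,\bm{V}_\nu)/(\lambda_\mu-\lambda_\nu)$, and the mixed $\omega\ind{^\mu_\nu}(\bm{X})$ for $\mu\neq\nu$ involves $\bm\tau(\cdot,\bm{V}^\mu,\bm{V}_\nu)/(\lambda_\mu+\lambda_\nu)$.

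Next, the integrability of $\D$ reduces — by the argument already used in Lemma \ref{total-geodesic} — to the vanishing of the rotation coefficients $\omega\ind{_{PQR}}$ where $\bm{V}_P,\bm{V}_Q$ both lie in $\D$ and $\bm{V}^R$ is dual to one of the annihilating forms, i.e. $[\D,\D]\subset\D$ iff for all spanning vectors $\bm{V}_P,\bm{V}_Q\in\D$ the bracket has no component along $\bm{V}^{\mu_i}$ or $\bm{V}_{\nu_j}$. Take for instance $\bm{V}_P=\bm{V}_{\mu_i}$ and we must kill the $\bm{V}^{\mu_k}$-component of $[\bm{V}_P,\bm{V}_Q]$: this is (twice) the antisymmetrisation of a connection coefficient of the type $\omega^{\mu_i\mu_k}$, which by the previous paragraph equals a constant multiple of $\bm{\theta}^{\mu_i}\wedge\bm\tau(\bm{V}^{\mu_i},\cdot,\cdot)$ evaluated appropriately, divided by $(\lambda_{\mu_i}-\lambda_{\mu_k})$. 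Here hypothesis \eqref{tauintcond}, namely $\bm{\theta}^\mu\wedge\bm\tau(\bm{V}^\mu,\cdot,\cdot)=0$ and $\bm{\theta}_\mu\wedge\bm\tau(\bm{V}_\mu,\cdot,\cdot)=0$, is precisely what forces all these obstruction components to vanish: each distribution $\D$ contains, for each $\mu$, exactly one of $\bm{V}_\mu$ or $\bm{V}^\mu$, so the relevant $\bm\tau$-components are always of one of the two forms appearing in \eqref{tauintcond}. Once $[\D,\D]\subset\D$ is established for all $2^m$ choices, the statement about complex structures in Euclidean signature is immediate: $\D\cap\bar\D=\{0\}$ (no real null vectors), so each $\D$ is the $+\ii$-eigenbundle of an almost complex structure, Hermitian for $\bm{g}$, and integrable by Newlander–Nirenberg; distinct choices of the $\{\mu_1,\ldots,\mu_r\}$ give distinct $\D$ hence distinct complex structures, $2^m$ of them. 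In Lorentzian signature, as recorded after Lemma \ref{normform}, $\bm{\theta}^1=\overline{\bm{\theta}_1}$ is real and the reality condition forces $\bar\D$ to be the distribution obtained from $\D$ by swapping $\bm{\theta}^\mu\leftrightarrow\bm{\theta}_\mu$ for $\mu=2,\ldots,m$ only; hence $\D\cap\bar\D$ is spanned by the real null vector paired with $\bm{\theta}^1$ (or $\bm{\theta}_1$), and there are just two such lines depending on whether $1\in\{\mu_1,\ldots,\mu_r\}$, each realised by $2^{m-1}$ of the distributions. The Corollary to Lemma \ref{total-geodesic} then gives that these two congruences are geodesic.

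The main obstacle is the bookkeeping in the middle step: extracting from \eqref{CKYeq} the clean statement that the off-diagonal connection $1$-forms are given by $\bm\tau$-components over eigenvalue gaps, and checking carefully that the $\bm{X}^*\wedge\bm{K}$ term and the $\dd\lambda_\mu$ terms contribute only to components that are irrelevant to the Frobenius condition for the particular $\D$ under consideration. One must also confirm that no component of $[\D,\D]$ along $\epsilon\bm{e}_{2m+1}$ survives in the odd case, but that is handled by the same matching since $\bm\phi$ has no $\bm{e}_{2m+1}$ leg; the $\epsilon=1$ analysis is deferred to the odd-dimensional theorem and only the $\epsilon=0$ case is needed here. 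Everything else is linear algebra organised by the eigenspace decomposition of $\bm\phi$.
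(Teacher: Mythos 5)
Your proposal follows essentially the same route as the paper's proof: write the conformal Killing--Yano equation in the null frame adapted to the normal form of $\bm{\phi}$, solve for the off-diagonal connection coefficients as $\bm{\tau}$-components divided by the eigenvalue gaps $(\lambda_\mu\pm\lambda_\nu)$ (noting that the $\bm{K}$ and $\dd\lambda_\mu$ terms only enter the components irrelevant to Frobenius), so that hypothesis \eqref{tauintcond} kills exactly the obstructing Ricci rotation coefficients, after which the Euclidean and Lorentzian counts follow from the reality discussion after Lemma \ref{normform}. The bookkeeping you flag as the remaining work is precisely the system of component equations \eqref{eq_*K1}--\eqref{eq_*K3} and their consequences in the paper, so the plan is sound and matches the published argument.
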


\begin{rem} Condition \eqref{tauintcond} is automatically satisfied 
	\begin{enumerate}
		\item when the conformal Killing-Yano tensor is closed, and
		\item in four dimensions.
	\end{enumerate}
\end{rem}

\begin{proof}
Let $M$ be a (real) $2m$-dimensional
(pseudo-) Riemannian manifold 
and $\bm{\phi}$ be a non-degenerate 
Killing-Yano tensor on $M$. Suppose that $\bm{\phi}$ has the form
\eqref{normal-form} in a null basis of one-forms
$\left\{\bm{\theta}^\mu,\bm{\theta}_\mu\right\}$, dual basis
$\{\bm V_\mu,\bm V^\mu\}$ and distinct 
eigenvalues $\left\{ \lambda_\mu, - \lambda_\mu \right\}$.

In terms of the vector basis $\left\{ \bm{V}_\mu, \bm{V}^\mu
\right\}$, the integrability of all these distributions will be implied 
by the conditions
\begin{align} \label{Riccirot}
	[ \bm{V}_\mu, \bm{V}_\nu ] & = \omega \ind{_{\mu} _{\nu} ^{\mu}} \bm{V}_\mu + \omega \ind{_{\mu} _{\nu} ^{\nu}} \bm{V}_\nu, \nonumber \\
	[ \bm{V}^\mu, \bm{V}^\nu ] & = \omega \ind{^{\mu} ^{\nu} _{\mu}} \bm{V}^\mu + \omega \ind{^{\mu} ^{\nu} _{\nu}} \bm{V}^\nu, \\
	[ \bm{V}_\mu, \bm{V}^\nu ] & = \omega \ind{_{\mu} ^{\nu} ^{\mu}} \bm{V}_\mu + \omega \ind{_{\mu} ^{\nu} _{\nu}} \bm{V}^\nu, \nonumber
\end{align}
satisfied for all distinct $\mu$, $\nu$, and no summation.   These are 
constraints on the Ricci rotation coefficients and hence on the connection.
In terms of the connection coefficients
 we must show
\begin{align} \label{int2m}
  \Gamma \ind{_{\kappa \mu \nu}} & = 0, &
  \Gamma \ind{^{\kappa \mu \nu}} & = 0, \qquad (\mbox{for all $\kappa, \mu,\nu$}), \nonumber \\
  \Gamma \ind{_{\kappa \mu} ^\nu} & = 0, &
  \Gamma \ind{^\kappa _\nu ^\mu} & = 0, \qquad (\mbox{for all $\nu \neq \mu, \kappa$})
  , \\
  \Gamma \ind{_\kappa ^{\mu \nu}} & = 0, &
  \Gamma \ind{^\kappa _{\mu \nu}} & = 0, \qquad (\mbox{for all $\kappa \neq \mu,\nu$}). \nonumber
\end{align}
which imply equations \eqref{Riccirot}.

In terms of basis components, the Killing-Yano equation \eqref{CKYeq} yields (no summation)
\begin{subequations}
\begin{align}
  \partial \ind{_{\kappa}} \phi \ind{_\mu ^\nu} + \left( \lambda_\mu -
  \lambda_\nu
  \right) \Gamma \ind{_{\kappa \mu} ^\nu} & = \tau \ind{_{\kappa \mu} ^\nu} - \frac{1}{n-1} \delta
  \ind*{_\kappa
  ^\nu} K \ind{_\mu},  \label{eq_*K1}
%&  \partial \ind{^\kappa} \phi \ind{_\mu ^\nu} + \left( \lambda_\mu -
%  \lambda_\nu
%  \right) \Gamma \ind{^\kappa _\mu ^\nu} & = \tau \ind{^{\kappa} _\mu ^\nu} + \frac{1}{n-1} \delta
%  \ind*{^\kappa
%  _\mu} K \ind{^\nu},
\\
  \left( \lambda_\mu + \lambda_\nu \right) \Gamma \ind{^\kappa _{\mu \nu}} & = \tau \ind{^\kappa _\mu _\nu} +\frac{2}{n-1} \delta
  \ind*{^\kappa _{\lb{\mu}}} K \ind{_{\rb{\nu}}}\label{eq_*K2},
%& - \left( \lambda_\mu + \lambda_\nu \right) \Gamma \ind{_\kappa ^{\mu \nu}} & = \tau \ind{_\kappa ^\mu ^\nu} + \frac{2}{n-1} \delta
%  \ind*{_\kappa ^{\lb{\mu}}} K \ind{^{\rb{\nu}}},
\\
  \left( \lambda_\mu + \lambda_\nu \right) \Gamma \ind{_{\kappa \mu \nu}} & =
  \tau \ind{_{\kappa \mu \nu}}.  \label{eq_*K3}
%&
%  \left( \lambda_\mu + \lambda_\nu \right) \Gamma \ind{^{\kappa \mu \nu}} & =
%  \tau \ind{^{\kappa \mu \nu}}.
\end{align}
\end{subequations}
 From equations \eqref{eq_*K1}, we then obtain
\begin{subequations}
\begin{align}
  K _\mu & = - (n-1) \partial _\mu \lambda _\mu,
%&  K ^\mu & = (n-1) \partial ^\mu \lambda
%    _\mu,
  \label{compKV1}\\
  \tau \ind{_{\nu \mu} ^\mu} & = \partial \ind{_\nu} \lambda \ind{_\mu},
%& \tau \ind{^{\nu \mu} _\mu} & =\partial \ind{^\nu} \lambda
%  \ind{_\mu},
\label{consteval}
& \mbox{for all $\nu \neq \mu$},\\
   \tau \ind{_{\nu \mu} ^\nu} - \frac{1}{n-1}  K \ind{_\mu} & = (\lambda \ind{_\mu} - \lambda \ind{_\nu})
  \Gamma \ind{_{\nu \mu} ^\nu}, 
%&
%  \tau \ind{_^\nu _\nu ^\mu} - \frac{1}{n-1} K \ind{^\mu} & = (\lambda \ind{_\mu} - \lambda \ind{_\nu})
%  \Gamma \ind{^\nu _\nu ^\mu},
\label{compKV2} & \mbox{for all $\nu \neq \mu$},\\
 \delta \ind*{_\kappa ^\mu} \tau \ind{_{\kappa \mu} ^\nu} & = \left( \lambda_\mu -
  \lambda_\nu
  \right) \Gamma \ind{_{\kappa \mu} ^\nu},
%&  \delta \ind*{_\kappa ^\mu} \tau \ind{^{\kappa} _\nu ^\mu} & = \left( \lambda_\mu -
%  \lambda_\nu
%  \right) \Gamma \ind{^\kappa _\nu ^\mu}, 
\label{PIntCond2} & \mbox{for all $\nu \neq \kappa, \mu$}.
\end{align}
\end{subequations}
On the other hand, equations
\eqref{eq_*K2} give
\begin{subequations}
\begin{align}
  \tau \ind{_\kappa ^\mu ^\nu} & = (\lambda_\mu + \lambda_\nu) \Gamma \ind{_\kappa ^\mu ^\nu},
%& \tau \ind{^\kappa _\mu _\nu} & = (\lambda_\mu + \lambda_\nu) \Gamma \ind{^\kappa _\mu _\nu},
& \mbox{for all $\kappa \neq
\mu,\nu$}, \label{PIntCond3} \\
  \tau \ind{^\nu _\nu _\mu} + \frac{1}{n-1} K \ind{_\mu} & = (\lambda \ind{_\mu} + \lambda \ind{_\nu})
  \Gamma \ind{^\nu _\nu _\mu},
%&  \tau \ind{_\nu ^\nu ^\mu} - \frac{1}{n-1} K \ind{^\mu} & = (\lambda \ind{_\mu} + \lambda \ind{_\nu})
%  \Gamma \ind{_\nu ^\nu ^\mu},
& \mbox{for all $\nu \neq \mu$}. \label{compKV3}
\end{align}
\end{subequations}
By symmetry, we have similar relations involving $\Gamma \ind{^\kappa _\mu ^\nu}$, $\Gamma \ind{_\kappa ^\mu ^\nu}$, and $\Gamma \ind{^\kappa ^\mu ^\nu}$. By assumption all the eigenvalues $\left\{ \lambda_\mu,
-\lambda_\mu \right\}$ are distinct, so that equations \eqref{eq_*K3},
\eqref{PIntCond2} and \eqref{PIntCond3} imply the integrability conditions \eqref{int2m}
if and only if
\begin{align*}
  \tau \ind{_{\kappa \mu \nu}} & = 0, &
  \tau \ind{^{\kappa \mu \nu}} & = 0, \qquad (\mbox{for all $\kappa, \mu,\nu$}), \\
  \tau \ind{_{\kappa \mu} ^\nu} & = 0, &
  \tau \ind{^\kappa _\nu ^\mu} & = 0, \qquad (\mbox{for all $\nu \neq \mu, \kappa$})
  , \\
  \tau \ind{_\kappa ^{\mu \nu}} & = 0, &
  \tau \ind{^\kappa _{\mu \nu}} & = 0, \qquad (\mbox{for all $\kappa \neq \mu,\nu$}),
\end{align*}
which is equivalent to equation \eqref{tauintcond}.

At this point, we now have enough information about the connection to
obtain the integrability of the maximal isotropic distributions.  In
particular, we have the condition $\Gamma_{\mu\nu\lambda}=0$ which
implies as in the proof of Lemma \ref{total-geodesic},
$$
\nabla_{\bm{V}_\mu} \bm{\theta}_1 \wedge \ldots \wedge \bm{\theta}_m=
\sum_\nu \Gamma \ind{_{\mu\nu}^\nu} \bm{\theta}_1 \wedge \ldots \wedge \bm{\theta}_m .
$$  
This, in particular, implies for
$\bm{\Omega}= \bm{\theta}_1 \wedge \ldots \wedge \bm{\theta}_m$ 
that $\dd \bm{\Omega}= \bm{\alpha} \wedge \bm{\Omega}$ for some $1$-form $\bm{\alpha}$, so that the distribution
$\D=\langle \bm{V}_1, \ldots, \bm{V}_m \rangle$
orthogonal to $\bm{\Omega}$ is integrable.  However, all the maximal
isotropic distributions determined by $\bm{\phi}$ are on an equal footing
with $\D$; all such distributions are equivalent to $\D$ by
interchanging
$( \bm{\theta}^\mu, \bm{\theta}_\mu)\rightarrow(\bm{\theta}_\mu,\bm{\theta}^\mu)$ for
different values of $\mu$. 
Thus, all such distributions are integrable.  This can, of course, be
checked explicitly by calculating $\nabla_{\bm{W}} \bm{\theta}_{\sigma
  (1)} \wedge \ldots \wedge \bm{\theta}_{\sigma(p)}\wedge \bm{\theta}^{\sigma(p+1)}\wedge\ldots
\bm{\theta}^{\sigma(m)}$ where $\sigma$ is an arbitrary permutation of
$1,\ldots,m$ and $\bm{W}$ is in the kernel of $\bm{\theta}_{\sigma
  (1)}\wedge\ldots\wedge \bm{\theta}_{\sigma(p)} \wedge \bm{\theta}^{\sigma(p+1)}\wedge\ldots
\bm{\theta}^{\sigma(m)}$.
\end{proof}

For future use we record the expressions for some of the remaining connection components. Combining equations \eqref{compKV1},
\eqref{compKV2} and \eqref{compKV3} gives
\begin{subequations}
\begin{align}
  \Gamma \ind{_\nu _\mu ^\nu}  & = \partial _\mu  \ln | \lambda_\mu - \lambda_\nu |,
&  \Gamma \ind{^\nu ^\mu _\nu}  & =  \partial ^\mu  \ln | \lambda_\mu - \lambda_\nu |,\label{LC-}  \\
  \Gamma \ind{^\nu _\mu _\nu}  & =  \partial _\mu \ln  | \lambda_\mu + \lambda_\nu |,
&  \Gamma \ind{_\nu ^\mu ^\nu}  & =  \partial ^\mu \ln  | \lambda_\mu + \lambda_\nu |. \label{LC+}
\end{align}
\end{subequations}

\begin{rem}
  We emphasise that the above result is essentially local. We have
  made the assumption, for example, that the Killing-Yano tensor has
  distinct and non-constant eigenvalues, and this assumption will
  generically break down on some nontrivial subset of codimension at
  least one. In general, then, the complex structures will not extend
  globally over such subsets. The Kerr-NUT-(A)dS metric provides such
  an example. Setting the mass and the NUT parameters to zero, the
  metric reduces to a Ricci-flat conformally flat metric. But it is a
  standard result that the round four-sphere does not admit a global
  hermitian complex structure (there are complex structures on
  the complement of a point in $S^4$ that naturally extend to $\C{\mathbb
    P}^2$ or a quadric). 
\end{rem}

\subsection{Odd-dimensional manifolds and integrable CR
  structures} 
\label{Oddimcase}

The above results extend naturally to odd-dimensional manifolds. When
$M$ is a $(2m+1)$-dimensional real manifold, the natural analogue of a
complex structure is a CR structure.  An almost Cauchy-Riemann (CR)
structure is an $m$-dimensional subbundle $\D$ of the complexified
tangent bundle $T_\mathbb{C} M$, so that $\D\cap\bar \D=0$.  It is a
CR structure if it is integrable, $[\D,\D]\subset \D$.  This is the
structure that a hypersurface in $\C^{m+1}$ inherits from the ambient
complex structure; $\D$ are those vectors in the holomorphic tangent bundle $T^{(1,0)}$ that are
tangent to the hypersurface.  On our (pseudo-) Riemannian manifold, we
will also require $\D$ to be isotropic so that we will have
$T_\mathbb{C} M = \D \oplus \bar\D \oplus K$ where $K$ is the
orthogonal complement of $\D\oplus\bar\D$.

Given a non-degenerate Killing-Yano tensor $\bm{\phi}$ on $M$, $K$
will be the kernel of $\bm{\phi}$. By
Lemma \ref{normform}, assuming that $\bm{\phi}$ has distinct non-zero
eigenvalues, we can find a basis
of $1$-forms $\left\{ \bm{e}^{0},\bm{\theta}^\mu, \bm{\theta}_\mu
\right\}$ all null except $\bm e^0$, in which $\bm{\phi}=\sum_\mu \lambda_\mu
\theta^\mu\wedge\theta_\mu$  is diagonal, degenerate on $K$, but
non-degenerate on $K^\perp$.
 We write $\left\{  \bm{V}_{0} 
, \bm{V}_\mu, \bm{V}^\mu
\right\}$, ($\mu=1,\ldots,m$), for the corresponding
dual vector basis so that $\bm{V}_0$ spans $K$, and the $2^m$ distributions
are found by choosing one vector from each of the pairs $(\bm{V}_{\mu}
,\bm{V}^{\mu} )$ for each $\mu$.
  
The question of whether the $2^m$ almost Cauchy-Riemann structures
$\bm{\phi}$ are integrable reduces to the integrability of the $2^m$
maximal isotropic distributions on $K^\perp$, hence, to whether
relations \eqref{Riccirot} are satisfied. So, if $\D$ is one of our
$2^m$ maximal
isotropic distribution in $K^\perp$, then $[ \D, \D ] \subset \D$.
More precisely, relations
\eqref{Riccirot} tell us that $\omega \ind{_{\mu} _{\nu}_0}$, $\omega
\ind{^{\mu} ^{\nu} _0}$, and $\omega \ind{_{\mu} ^{\nu} _0}$ vanish
for all $\mu \neq \nu$. 
In odd dimensions, the conformal Killing-Yano
equations \eqref{CKYeq} gives the extra conditions
\begin{subequations}
\begin{align}
	\partial_0 \phi \ind{_\mu ^\nu} + (\lambda_\mu - \lambda_\nu) \Gamma \ind{_0 _\mu ^\nu} & = 0 \label{CKYodd1a}\\
	(\lambda_\mu + \lambda_\nu) \Gamma \ind{_0 _\mu _\nu} & = \tau \ind{_0 _\mu _\nu} \label{CKYodd1b}\\
	 \lambda_\mu \Gamma \ind{_0 _0 _\mu}  & = \frac{1}{n-1} K \ind{_\mu} \label{CKYodd1c}\\
	-\lambda_\mu \Gamma \ind{_{\nu \mu 0}}  & = \tau \ind{_{\nu \mu 0}} \label{CKYodd1d} \\
	\lambda_\mu \Gamma \ind{^\nu _{\mu 0}} & = \tau \ind{^\nu _{\mu 0}} + \frac{1}{n-1} \delta \ind*{_\mu ^\nu} K_0. \label{CKYodd1e}
\end{align}
\end{subequations}
Equations \eqref{CKYodd1a} lead to
\begin{subequations}
\begin{align}
	\partial_0 \lambda_\mu & =0 \label{CKYodd2a} \\
	(\lambda_\mu - \lambda_\nu) \Gamma \ind{_0 _\mu ^\nu} & = 0 & \mbox{for all $\mu \neq \nu$}, \label{CKYodd2b}
\end{align}
\end{subequations}
and
equations \eqref{CKYodd1e} to
\begin{subequations}
\begin{align}
	\lambda_\mu \Gamma \ind{^\nu _{\mu 0}} & = \tau \ind{^\nu _{\mu 0}} & \mbox{for all $\mu \neq \nu$}, \label{CKYodd3a} \\
	\lambda_\mu \Gamma \ind{^\mu _{\mu 0}} & = \tau \ind{^\mu _{\mu 0}} + \frac{1}{n-1} K_0. \label{CKYodd3b}
\end{align}
\end{subequations}
By the assumptions on the eigenvalues, equations \eqref{CKYodd1d} and \eqref{CKYodd3a} show that
\begin{align*}
	\Gamma \ind{_{\nu \mu 0}} & = 0 = \Gamma \ind{^{\nu \mu} _0} = \Gamma \ind{^\nu _{\mu 0}} & \Longleftrightarrow  & & \tau \ind{_{\nu \mu 0}} & = 0 =  \tau \ind{^{\nu \mu} _0} = \tau \ind{^\nu _{\mu 0}}, & \mbox{for all $\mu \neq \nu$},
\end{align*}
which is subsumed into condition \eqref{tauintcond}.
Thus, Theorem \ref{eventhm} extends to the odd-dimensional case.

\begin{rem}
  Given such an integrable distribution $\D$, we can adjoin $\bm V_0$
  to form the distribution $\tilde \D= \{\bm V_0, \D\}$.  The
  integrability of this distribution requires in addition to the
  above, that $[\bm V_\mu,\bm V_0]=\omega \ind{_{\mu 0}^\mu} \bm V_\mu +
  \omega \ind{_{\mu 0}^0} \bm V_0$ and $[\bm V^\mu,\bm V_0]=\omega \ind{^{\mu} _{0 \mu}} \bm V^\mu +
  \omega \ind{^\mu _0^0} \bm V_0$ for all $\mu$, but this follows from the above
  conditions on the connection.  Thus, these distributions will also
  be integrable.
\end{rem}

We also note that combining equations \eqref{compKV1} and  \eqref{CKYodd1c} yields
\begin{align}\label{OddCond}
	\Gamma \ind{_{0 0} _\mu} = - \partial_\mu \ln |\lambda_\mu|.
\end{align}

\subsection{Examples}

\subsubsection{The $*$-Killing (or closed conformal Killing-Yano) case}
For the Kerr-NUT-(A)dS, we can see these distributions explicitly.
The integrability is most evident in terms of the inverse metric which
in the even dimensional case is given by \cites{Chen2006}
$$
\bm{g}^{-1} =\sum_{\mu=1}^m 2 \bm{V}^\mu \odot \bm{V}_\mu
$$
where
\begin{align*}
\bm{V}_\mu & = \frac{2^{-1/2}}{\sqrt{U_\mu}}\left( \sqrt{X_\mu}\parderv{}{x_\mu} -
\ii
\frac{1}{\sqrt{X_\mu}}\left(
\sum_{k=0}^{m-1} (-1)^k x_\mu ^{2(m-1-k)}\parderv{}{\psi_k} \right) \right)\, , &
\bm{V}^\mu & =\overline{\bm{V}_\mu}.
\end{align*}
The key feature here is that, apart from the scale factor $\sqrt{U_\mu}$, in
these coordinates, the coefficients of the vectors $\bm{V}^\mu$ and $\bm{V}_\mu$
depend only on the coordinate $x_\mu$.  Thus a distributions made up
of any set of the basis vectors with distinct values of $\mu$ will
commute amongst themselves so that the distribution is integrable.
(The only pairs of these basis vectors that do no commute amongst
themselves in this way are $\{\bm{V}^\mu, \bm{V}_\mu\}$.) In
particular, all the maximal istropic distributions spanned by $m$ such
basis vectors with distinct values of $\mu$ will form an integrable
distribution.  
Explicitly, we have
\begin{align*}
	\left[ \bm{V}_\mu , \bm{V}_\nu \right] & = 2^{-1/2} \frac{x_\nu \sqrt{Q_\nu}}{x_\nu ^2 - x_\mu^2} \bm{V} _{\mu} - 2^{-1/2} \frac{x_\mu \sqrt{Q_\mu}}{x_\mu ^2 - x_\nu^2} \bm{V} _{\nu} \\
	\left[ \bm{V}^\mu , \bm{V}^\nu \right] & =  2^{-1/2} \frac{x_\nu \sqrt{Q_\nu}}{x_\nu ^2 - x_\mu^2}  \bm{V}^\mu - 2^{-1/2} \frac{x_\mu \sqrt{Q_\mu}}{x_\mu ^2 - x_\nu^2} \bm{V}^\nu  \\
	\left[ \bm{V}_\mu , \bm{V}^\nu \right] & = 2^{-1/2} \frac{x_\nu \sqrt{Q_\nu}}{x_\nu ^2 - x_\mu^2} \bm{V} _\mu - 2^{-1/2} \frac{x_\mu \sqrt{Q_\mu}}{x_\mu ^2 - x_\nu^2} \bm{V}^\nu  \\
	\left[ \bm{V}_\mu , \bm{V}^\mu \right] & = 2^{-1/2}  \parderv{\sqrt{Q_\mu}}{x_\mu} \left( \bm{V} _\mu  - \bm{V}^\mu \right) + 2 \cdot 2^{-1/2} \sum_{\nu \neq \mu} \frac{x_\mu \sqrt{Q_\nu}}{x_\nu ^2 - x_\mu^2}\left( \bm{V} _\nu  - \bm{V}^\nu \right) ,
\end{align*}
where $Q_\mu = X_\mu / U_\mu$.

\subsubsection{A five-dimensional black hole solution}
We consider the recently discovered metric of L\"{u}, Mei and Pope, \cites{Lu2009}.
The metric is $\bm{g}_5=\sum_{i=0}^4 (\bm e^i)^2$ with
\begin{align*}
	\bm{e}^0 & = \left( \frac{a_0}{xy} \right)^{1/2} (\dd \phi +
        (x+y) \dd \psi + xy \dd t). 
\\
	\bm{e}^1 & = \frac{1}{2(1-x y)}\left(\frac{x-y}{X}\right)^{1/2} \dd x &
	\bm{e}^3 & = \frac{1}{1-xy}\left(\frac{X}{x(x-y)}\right)^{1/2} (\dd \phi +y \dd \psi) \\
 	\bm{e}^2 & = \frac{1}{2(1-x y)}\left(\frac{y-x}{Y}\right)^{1/2} \dd y &
	\bm{e}^4 & = \frac{1}{1-xy}\left(\frac{Y}{y(y-x)}\right)^{1/2} (\dd \phi +x \dd \psi) 
\end{align*}
where $X$ and $Y$ are quartic polynomials in $x$ and $y$
respectively.  Then, the dual vector basis is given by
\begin{align*}
	\bm{e}_0 & = \left( \frac{1}{a_0 xy} \right)
        ^{1/2} \partial_t\\
	\bm{e}_1 & = 2 (1-xy)\left( \frac{X}{x-y} \right) ^{1/2} \partial_{x} &
	\bm{e}_3 & = (1-xy) \left( \frac{1}{x(x-y)X} \right)^{1/2} (x^2 \partial_{\phi} - x \partial_\psi + \partial_t) \\
	\bm{e}_2 & = 2 (1-xy)\left( \frac{Y}{y-x} \right) ^{1/2} \partial_y &
	\bm{e}_4 & = (1-xy) \left( \frac{1}{y(y-x)Y} \right)^{1/2} (y^2 \partial_\phi - y \partial_\psi + \partial_t)
\end{align*}
Defining $\bm{V}_1 = 2^{-1/2}(\bm{e}_1 - \ii \bm{e}_3)$ and
$\bm{V}_2 = 2^{-1/2}(\bm{e}_2 - \ii \bm{e}_4)$, we obtain the
Ricci rotation coefficients from the commutators
\begin{align*}
	\left[ \bm{V}_1, \bm{V}_2 \right] & = -2^{-1/2} \left( \frac{Y}{y-x} \right) ^{1/2} \frac{ (-2x^2 + xy +1)}{x-y} \bm{V}_1
+ 2^{-1/2}\left( \frac{X}{x-y} \right) ^{1/2} \frac{(-2y^2 + xy +1) }{y-x} \bm{V}_2 \\
	\left[ \bm{V}_1, \bm{V}^2 \right] & = - 2^{-1/2}\left( \frac{Y}{y-x} \right) ^{1/2} \frac{ (-2x^2 + xy +1)}{x-y} \bm{V}_1
+ 2^{-1/2}\left( \frac{X}{x-y} \right) ^{1/2} \frac{(-2y^2 + xy +1) }{y-x} \bm{V}^2 \\
	\left[ \bm{V}_1, \bm{V}^1 \right] & = 2^{-1/2} \left( 2 \partial_x \left( (1-xy)\left(\frac{X}{x-y}\right)^{1/2}\right) +\left(\frac{X}{x-y}\right)^{1/2}  \left( \frac{1-5xy}{x} \right) \right) (\bm{V}_1 - \bm{V}^1) \\
& \qquad -  2^{-1/2}\left( \frac{Y}{y-x} \right) ^{1/2} \left( \frac{x}{y} \right) ^{1/2} \frac{2(1-xy) }{(x-y)} (\bm{V}_2 - \bm{V}^2) 
+ 2 \ii  \left(\frac{a_0}{xy}\right)^{1/2} (1-xy)^2 x^{-1/2} \bm{e}_0 \\
	\left[ \bm{V}_1, \bm{e}_0 \right] & = - 2^{-1/2}
        \frac{1-xy}{x} \left( \frac{X}{x-y} \right)^{1/2} \bm{e}_0 
\end{align*}
with the remaining commutators given by complex conjugation and the
symmetry $ 1\leftrightarrow 2$ accompanied by $x\leftrightarrow y$ and
$X\leftrightarrow Y$. 

It is also straighforward to 
check that its associated rank-two and rank-three complex distributions 
are all integrable .

We now turn our attention to the existence of a conformal Killing-Yano tensor $\bm{\phi}$ in normal form in this basis. We first note that the four-dimensional metric $\bm{g}_4 = \sum_{i=1}^4 (\bm e^i)^2$ has a conformal Killing-Yano tensor given by
\begin{align*}
	\bm{\phi} & = \ii \frac{x^{1/2}}{1-xy} \bm{\theta}^1 \wedge \bm{\theta} _1 +
        \ii \frac{y^{1/2}}{1-xy} \bm{\theta}^2 \wedge \bm{\theta} _2 \\
 & =  \frac{1}{2 (1-xy)^3} ( \dd x \wedge (\dd \phi + y \dd \psi) + \dd y \wedge (\dd \phi + x \dd \psi) )\,
\end{align*}
where $\bm{\theta}^\mu = 2^{-1/2}(\bm{e}^\mu + \ii \bm{e}^{2+\mu})$ and $\bm{\theta}_\mu = \overline{\bm{\theta}^\mu}$. This choice can be justified by the fact that the metric $\bm{g}_4$ is simply a conformal rescaling of a (euclideanised) Kerr metric in Pleba\'{n}ski-Demia\'{n}ski form, and that a conformal Killing-Yano tensor has conformal weight $3$. One can also check that, with this choice, the eigenvalues of $\bm{\phi}$ satisfy equations \eqref{LC-} and \eqref{LC+}. On the other hand, on considering the full metric metric $\bm{g}_5$ and by \eqref{OddCond}, the eigenvalues $\lambda_\mu$ of $\bm{\phi}$ must also satisfy
\begin{align*}
	2 \partial_x \ln |\lambda_1| & = \frac{1}{x} & 	2 \partial_y \ln |\lambda_2| & = \frac{1}{y},
\end{align*}
solutions of which can be taken to be $\lambda_1 = x^{1/2}$ and $\lambda_2 = y^{1/2}$.
Hence, in spite of the existence of integrable maximal isotropic distributions, there is no conformal Killing-Yano tensor in normal form in this basis, and the converse of the (odd-dimensional version of) Theorem \ref{eventhm} does not hold.

\subsection{Conditions on the Weyl conformal tensor}
Let us return to the general case of a conformal Killing-Yano tensor
$\bm{\phi}$ on a real $2m$-dimensional (pseudo-) Riemannian 
manifold $M$. As before, we consider the complexification of the
tangent bundle $T M$. We can extend $\bm{\phi}$ to an endomorphism
$\hat{\bm{\phi}}$ on the space of $2$-forms $\bigwedge ^2 T^*
M$. Similarly, we can view the Weyl tensor $\bm{C}$ as an endomorphism
$\hat{\bm{C}}$ on $\bigwedge ^2 T^* M$. It is a standard result
\cite{Semmelmann2001} that the commutator of 
$\hat{\bm{C}}$ and $\hat{\bm{\phi}}$ vanishes, i.e.
\begin{align} \label{WTws2}
        \left[ \hat{\bm{C}} , \hat{\bm{\phi}} \right] = 0.
\end{align}
If $\bm{\phi}$ is diagonal in the null basis $\left\{ \bm{\theta} ^a \right\}=\left\{ \bm{\theta} ^\mu , \bm{\theta} _\mu \right\}$, then $\hat{\bm{\phi}}$ is also diagonal in the canonical basis of $2$-forms $\left\{ \bm{\theta} \ind{^a}\wedge \bm{\theta} \ind{^b} \right\}$, and
\begin{align*}
        \hat{\bm{\phi}} \left(\bm{\theta} \ind{^\mu} \wedge \bm{\theta} \ind{^\nu} \right) & = \left(\lambda _{\mu} + \lambda _{\nu} \right) \bm{\theta} \ind{^\mu} \wedge \bm{\theta} \ind{^\nu}, &
\hat{\bm{\phi}} \left(\bm{\theta} \ind{_\mu} \wedge \bm{\theta} \ind{_\nu} \right) & = -\left( \lambda _{\mu} + \lambda _{\nu} \right) \bm{\theta} \ind{_\mu} \wedge \bm{\theta} \ind{_\nu}, \\
        \hat{\bm{\phi}} \left(\bm{\theta} \ind{^\mu} \wedge \bm{\theta} \ind{_\nu} \right) & = \left( \lambda _{\mu} - \lambda _{\nu} \right) \bm{\theta} \ind{^\mu} \wedge \bm{\theta} \ind{_\nu}.
\end{align*}
Assuming that the eigenvalues of $\bm{\phi}$ are all distinct, $\hat{\bm{\phi}}$ has $2m(m-1)$ non-zero eigenvalues and has an $m$-dimensional kernel spanned by $\left\{ \bm{\theta} \ind{^\mu} \wedge \bm{\theta} \ind{_\mu} : \mu =1, \ldots m \right\}$. By the commutation relation \eqref{WTws2}, it then follows that $\hat{\bm{C}}$ and $\hat{\bm{\phi}}$ have $m(2m-1)$ common eigen-$2$-forms,
and we can deduce that all components of the Weyl tensor with the possible exception of
\begin{align*}
	C \ind{_{\mu \nu} ^{\mu \nu}}, && C \ind{_\mu ^\nu _\nu ^\mu }, && C \ind{_\mu ^\mu _\nu ^\nu}, && C \ind{_\mu ^\mu _\mu ^\mu},
\end{align*}
for all distinct $\mu$, $\nu$, vanish in the canonical basis. Consequently,
\begin{thm} \label{TWeyl}
        Let $\bm{\phi}$ be a non-degenerate conformal Killing-Yano tensor with distinct eigenvalues, diagonal in the null basis $\left\{ \bm{\theta} ^a \right\}=\left\{ \bm{\theta} ^\mu , \bm{\theta} _\mu \right\}$. Then the Weyl tensor $\bm{C}$ satisfies (no summation)
\begin{align*}
        \bm{C} (\bm{V} _a, \bm{V} _b, \bm{V}_c, \bm{V} _d) & = 0, & 
\bm{C} (\bm{V} _\mu, \bm{V} ^\mu, \bm{V} _a, \bm{V} _b) & = 0,\\
	 \bm{C} (\bm{V} _\mu, \bm{V} _a, \bm{V} ^\mu, \bm{V}_b) & = 0, & \bm{C} (\bm{V} _\mu, \bm{V} _a, \bm{V} _\mu, \bm{V}_b) & = 0,
\end{align*}
for all distinct $\mu,a,b,c,d$, where $\left\{ \bm{V} _a \right\}=\left\{ \bm{V} _\mu , \bm{V} ^\mu \right\}$ is the dual basis.
%Alternatively,
%\begin{align*}
%        \bm{C} (\bm{V} _a, \bm{V} _a) \wedge \bm{V} _a & = 0
%\end{align*}
%for all $a$, where $\bm{C}$ is now viewed as an element of $\bigodot ^2 \mathfrak{so}(n)$.
\end{thm}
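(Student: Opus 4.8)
The plan is to deduce the statement directly from the commutation relation \eqref{WTws2}; the spectral analysis of $\hat{\bm\phi}$ carried out above supplies most of the argument, so what remains is to convert it into a statement about the frame components of $\bm C$. First I would record, as in the discussion preceding the theorem, that in the canonical basis $\{\bm\theta^a\wedge\bm\theta^b\}$ of $\bigwedge^2 T^*M$ the endomorphism $\hat{\bm\phi}$ is diagonal, with eigenvalue $\lambda_\mu+\lambda_\nu$ on $\bm\theta^\mu\wedge\bm\theta^\nu$, eigenvalue $-(\lambda_\mu+\lambda_\nu)$ on $\bm\theta_\mu\wedge\bm\theta_\nu$, eigenvalue $\lambda_\mu-\lambda_\nu$ on $\bm\theta^\mu\wedge\bm\theta_\nu$ for $\mu\neq\nu$, and eigenvalue $0$ on the $m$-dimensional kernel $\sspan\{\bm\theta^\mu\wedge\bm\theta_\mu\}$. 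Under the genericity hypothesis these $2m(m-1)$ non-zero eigenvalues are simple, so $\hat{\bm C}$, commuting with $\hat{\bm\phi}$, must act by a scalar on each corresponding eigenline and must preserve $\ker\hat{\bm\phi}$. Hence in the canonical basis $\hat{\bm C}$ is block-diagonal: diagonal off the kernel, and restricting to a (symmetric, but otherwise unconstrained) endomorphism of the $m$-dimensional space $\sspan\{\bm\theta^\mu\wedge\bm\theta_\mu\}$.

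Next I would translate this block structure into vanishing of frame components of $\bm C$. A matrix entry of $\hat{\bm C}$ between two canonical basis $2$-forms equals, up to the nowhere-vanishing normalisation fixed by $\bm g=\sum_\mu\bm\theta^\mu\odot\bm\theta_\mu$ and $\bm g^{-1}=\sum_\mu 2\,\bm V^\mu\odot\bm V_\mu$, a component $\bm C(\bm V_a,\bm V_b,\bm V_c,\bm V_d)$ whose four indices are drawn from the labels of the two $2$-forms (raising an index interchanges a $\bm\theta^\mu$-slot with the dual vector $\bm V^\mu$ and a $\bm\theta_\mu$-slot with $\bm V_\mu$). The vanishing of all off-diagonal entries except those inside the kernel block then says precisely that $\bm C(\bm V_a,\bm V_b,\bm V_c,\bm V_d)=0$ whenever the four indices do not collapse onto at most two values of $\mu$ arranged as in the kernel; using the algebraic symmetries of the Weyl tensor, i.e. $\bm C(\bm X,\bm Y,\bm Z,\bm W)=\bm C(\bm Z,\bm W,\bm X,\bm Y)$, skew-symmetry in the first pair and in the last pair, and the first Bianchi identity, to discard the redundant slot arrangements leaves exactly the four families displayed in the theorem. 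The complementary components $C \ind{_{\mu \nu} ^{\mu \nu}}$, $C \ind{_\mu ^\nu _\nu ^\mu}$, $C \ind{_\mu ^\mu _\nu ^\nu}$, $C \ind{_\mu ^\mu _\mu ^\mu}$ are then the diagonal entries of $\hat{\bm C}$ on the non-kernel eigenlines together with the entries of the $m\times m$ kernel block, and are left unconstrained.

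I expect no genuine obstacle, only one point deserving care: claiming that the $2m(m-1)$ non-zero eigenvalues of $\hat{\bm\phi}$ are simple is slightly stronger than distinctness of the $\lambda_\mu$. Non-degeneracy and distinctness only force $\lambda_\mu\pm\lambda_\nu\neq 0$; simplicity of the spectrum of $\hat{\bm\phi}$ additionally excludes accidental resonances such as $\lambda_\mu+\lambda_\nu=\lambda_\rho+\lambda_\sigma$ with $\{\mu,\nu\}\neq\{\rho,\sigma\}$, or $\lambda_\mu+\lambda_\nu=\lambda_\rho-\lambda_\sigma$. Such resonances fail on a dense open set, and in the explicit examples of interest they are absent; where they occur, $\hat{\bm C}$ may pick up extra components, but only between canonical $2$-forms sharing the same $\hat{\bm\phi}$-eigenvalue, so the theorem should be read with this genericity understood. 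Apart from this, the proof uses only \eqref{WTws2} and the normal form of Lemma \ref{normform}; in particular, the conformal Killing-Yano equation enters merely through the standard identity \eqref{WTws2}, so the condition \eqref{tauintcond} assumed in Theorem \ref{eventhm} is not needed here.
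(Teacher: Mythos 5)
Your proposal is correct and follows essentially the same route as the paper: the argument there is precisely to diagonalise $\hat{\bm{\phi}}$ on $\bigwedge^2 T^*M$ in the canonical basis, invoke the commutation relation \eqref{WTws2} to force $\hat{\bm{C}}$ to preserve the eigenspaces, and read off the vanishing frame components. Your closing caveat about accidental resonances ($\lambda_\mu+\lambda_\nu=\lambda_\rho\pm\lambda_\sigma$, etc.) is a genuine refinement that the paper glosses over --- its claim that $\hat{\bm{C}}$ and $\hat{\bm{\phi}}$ share the full canonical eigenbasis tacitly assumes the non-zero eigenvalues of $\hat{\bm{\phi}}$ are simple, which distinctness of the $\lambda_\mu$ alone does not guarantee --- so you are right to read the theorem with that extra genericity understood.
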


\begin{rem}
	Using the notation set in Section \ref{Oddimcase}, we note that 
%\begin{align*}
%\hat{\bm{\phi}} \left(\bm{\theta} \ind{^\mu} \wedge \bm{e} \ind{^0} \right) & = \lambda _{\mu} \bm{\theta} \ind{^\mu} \wedge \bm{\theta} \ind{^0} & \hat{\bm{\phi}} \left(\bm{\theta} \ind{_\mu} \wedge \bm{e} \ind{^0} \right) & = -\lambda _{\mu} \bm{\theta} \ind{_\mu} \wedge \bm{\theta} \ind{^0},
%\end{align*}
$\bm{\theta} \ind{^\mu} \wedge \bm{e} \ind{^0}$ and $\bm{\theta} \ind{_\mu} \wedge \bm{e} \ind{^0}$ are also eigenspinors of $\hat{\bm{\phi}}$, and thus, of $\hat{\bm{C}}$ too, which implies that $C \ind{_\mu _0 ^\mu ^0}$ may not vanish. Hence, the above theorem also holds in odd dimensions, where the lower-case Roman indices may now take the value $0$.
\end{rem}

In four dimensions, the Weyl tensor $\hat{\bm{C}}$
splits into a SD part $\hat{\bm{C}}^+$ and an ADS part
$\hat{\bm{C}}^-$. Each of $\hat{\bm{C}}^\pm$ has a pair of degenerate
eigenvalues, and their eigen-$2$-forms are precisely the SD and ASD
isotropic $2$-planes
\begin{align*}
\left\{ \bm{\theta} \ind{^1} \wedge \bm{\theta} \ind{^2},\, \bm{\theta}
  \ind{_1} \wedge \bm{\theta} \ind{_2},\, \bm{\theta} \ind{^1} \wedge \bm{\theta} \ind{ _1} + \bm{\theta} \ind{^2} \wedge \bm{\theta} \ind{_2}
\right\}
\quad \mbox{and} \quad \left\{ \bm{\theta} \ind{^1} \wedge \bm{\theta} \ind{_2},\, \bm{\theta} \ind{_1} \wedge \bm{\theta} \ind{^2},\, \bm{\theta} \ind{^1} \wedge \bm{\theta} \ind{_1} - \bm{\theta} \ind{^2} \wedge \bm{\theta} \ind{_2}
\right\}.
\end{align*}
In the language of general relativity, this is the defining
property for the manifold to be of Petrov\footnote{Although this
  classification applies mostly to 
Lorentzian manifolds, it can easily be extended to four-dimensional
proper Riemmanian 
manifolds \cite{Karlhede1986}.} type D. In fact, the existence of a conformal Killing $2$-form $\bm{\phi}$ on a four-dimensional (Lorentzian) manifold implies \cites{Dietz1981,Dietz1982,Glass1999} that the spacetime is of
Petrov type D or N according to whether $\bm{\phi}$ is of rank
$4$ or of rank $2$. A classification of the Weyl tensor in higher-dimensional Lorentzian spacetimes has been undertaken in 
\cites{Coley2004,Milson2005,Coley2006,Pravda2004,Pravda2007}, wherein the four-dimensional concept of \emph{(gravitational) principal null direction} (GPND) is generalised to that of \emph{Weyl aligned null directions} (WAND). It is shown in \cites{Hamamoto2007,Pravda2007} that the
Kerr-NUT-(A)dS metric is of Petrov type D in an
appropriate sense. More generally, the following statement, which
answers a conjecture put forward in \cite{Frolov2008}, is a direct
consequence of Theorem \ref{TWeyl}. 
\begin{cor}
        Let $\bm{\phi}$ be a non-degenerate conformal Killing-Yano tensor
        with distinct eigenvalues, diagonal in the null basis
        $\left\{ \bm{\theta} ^a \right\}$. Then each of the basis vectors $\{ \bm{V} _a \}$
        is a WAND of the Weyl tensor $\bm{C}$. In particular, $\bm{C}$
        is of type D. 
\end{cor}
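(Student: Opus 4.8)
\textit{Proof plan for the Corollary.}
The plan is to read the statement off directly from Theorem~\ref{TWeyl} by translating it into the language of boost weights. Recall that, for a null vector $\bm{\ell}$, one picks a partner null vector $\bm{n}$ with $\bm{g}(\bm{\ell},\bm{n})\neq0$ and a transverse frame spanning $\{\bm{\ell},\bm{n}\}^{\perp}$, and decomposes $\bm{C}$ into pieces of definite boost weight $b\in\{-2,-1,0,1,2\}$ under $\bm{\ell}\mapsto\alpha\bm{\ell}$, $\bm{n}\mapsto\alpha^{-1}\bm{n}$ (transverse vectors fixed); $\bm{\ell}$ is a WAND precisely when the boost weight $+2$ part of $\bm{C}$ vanishes, and $\bm{C}$ is of type~D when there is a null pair $(\bm{\ell},\bm{n})$ relative to which only the boost weight $0$ part is non-zero. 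Working in the complexification (and noting that in Lorentzian signature $\bm{\theta}^1,\bm{\theta}_1$, hence $\bm{V}_1,\bm{V}^1$, are real and null), I would take $\bm{\ell}=\bm{V}_1$, $\bm{n}=\bm{V}^1$, with transverse frame $\{\bm{V}_\mu,\bm{V}^\mu:\mu\geq2\}$ (together with $\bm{V}_0$ in odd dimensions). Under the corresponding boost the transverse vectors have weight $0$, each occurrence of $\bm{V}_1$ in a component contributes $+1$, and each occurrence of $\bm{V}^1$ contributes $-1$.

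The key step is then the observation that every component which Theorem~\ref{TWeyl} allows to be non-zero has boost weight $0$ with respect to $(\bm{V}_1,\bm{V}^1)$. Indeed the surviving components are, up to the symmetries of the Riemann tensor, $C \ind{_{\mu \nu} ^{\mu \nu}}$, $C \ind{_\mu ^\nu _\nu ^\mu }$, $C \ind{_\mu ^\mu _\nu ^\nu}$, $C \ind{_\mu ^\mu _\mu ^\mu}$ for distinct $\mu,\nu$ (together with $C \ind{_{\mu 0} ^{\mu 0}}$ in odd dimensions), and in each of these, for every label in $\{1,\dots,m\}$ that appears, that label occurs equally often in a lower slot ($\bm{V}_\mu$) and an upper slot ($\bm{V}^\mu$); in particular the label $1$ is balanced, so the component is invariant under the boost and has weight $0$, the $\bm{V}_0$ slots contributing weight $0$ automatically. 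Hence in the frame built from $(\bm{V}_1,\bm{V}^1)$ only boost weight $0$ components of $\bm{C}$ survive: this is exactly the type~D condition, with $\bm{V}_1$ and $\bm{V}^1$ a pair of (multiple) WANDs, and in particular the boost weight $+2$ part vanishes for this null pair.

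To conclude that \emph{every} $\bm{V}_a$ is a WAND, I would invoke the freedom in the normal form of $\bm{\phi}$ described in Lemma~\ref{normform}: permuting the index $\mu$ and interchanging $\bm{\theta}^\mu\leftrightarrow\bm{\theta}_\mu$ (hence $\bm{V}_\mu\leftrightarrow\bm{V}^\mu$) maps the list of possibly non-zero Weyl components of Theorem~\ref{TWeyl} to itself, so the argument above applies verbatim with $\bm{V}_1$ replaced by any $\bm{V}_a$ and $\bm{V}^1$ replaced by the other member of its null pair; thus each $\bm{V}_a$ is a (multiple) WAND, and $\bm{C}$ is of type~D. There is no genuine obstacle here; the only points requiring care are bookkeeping ones, namely verifying that the list of non-vanishing components in Theorem~\ref{TWeyl} is really closed under the Riemann symmetries (so that no ``unbalanced'' component is overlooked), and matching the present null basis — whose transverse part consists of null rather than spacelike vectors — to the standard higher-dimensional null-frame definition of WAND and type~D, which amounts to replacing $\{\bm{V}_\mu,\bm{V}^\mu:\mu\geq2\}\cup\{\bm{V}_0\}$ by a real orthonormal basis of the same span, a change that preserves boost weights.
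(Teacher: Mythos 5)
Your proposal is correct and coincides with the paper's (implicitly indicated) argument: the corollary is stated as a direct consequence of Theorem~\ref{TWeyl}, and the intended reasoning is exactly your boost-weight bookkeeping — every component left unconstrained by Theorem~\ref{TWeyl} is balanced in each index label, hence of boost weight $0$ with respect to every null pair $(\bm{V}_\mu,\bm{V}^\mu)$, so each $\bm{V}_a$ is a multiple WAND and $\bm{C}$ is of type~D. Your two cautionary remarks (closure of the surviving list under the Riemann symmetries, and passing from the complex null transverse frame to a real orthonormal one without changing boost weights) are precisely the right points to check, and both go through.
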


\subsection{Relation to Hamiltonian $2$-forms}

Reference \cite{Apostolov2006} introduces the notion of a \emph{Hamiltonian
  $2$-form} on a K\"{a}hler manifold, a $(1,1)$-form $\bm{\psi}$ which satisfies 
\begin{align}\label{Hamil2}
        \nabla _{\bm{X}} \bm{\psi} & = \frac{1}{2} \left( \dd \sigma
          \wedge \bm{J} (\bm{X}^*) - \bm{J} (\dd \sigma) \wedge
          \bm{X}^* \right), & \Biggl( \nabla \ind{_c} \psi \ind{_{a
            b}} \Biggr. & = \Biggl. - \frac{1}{2} \left( \omega
          \ind{_{c \lb{a}}} \nabla \ind{_{\rb{b}}} \sigma +\sum_d g \ind{_{c
              \lb{a}}} J \ind{_{\rb{b}} ^d} \nabla \ind{_d} \sigma
        \right) \Biggr) 
\end{align}
for all vector fields $\bm{X}$. Here, $\nabla$ is the Levi-Civita
covariant derivative, and $\bm{J}$ the complex structure. Contracting equation \eqref{Hamil2} with the
K\"{a}hler form $\bm{\omega}$ yields $\sigma = \tr _{\bm{\omega}}
\bm{\psi}$, the trace of $\bm{\psi}$ with respect to
$\bm{\omega}$.

In \cite{Martelli2005}, a new class of five-dimensional toric
Einstein-Sasaki manifolds is constructed by taking the BPS
(supersymmetric) limit of a four-dimensional black hole solution
similar to the Kerr-NUT-(A)dS metric. These limiting cases were later
generalised to arbitrary dimensions in
\cites{Chen2006,Hamamoto2007}. Broadly, under the change of
coordinates $x _\mu \rightarrow x _\mu= 1 + \varepsilon \xi_\mu$ and
after linear redefinitions of the coordinates $\psi_k$ and the
constants, in the limit $\varepsilon \rightarrow 0$, the
$2m$-dimensional Kerr-NUT-(A)dS metric becomes
\begin{align*}
  \bm{g} = \sum _{\mu=1}^{m} \left( \tilde{\bm{e}} ^\mu \odot \tilde{\bm{e}} ^\mu + \tilde{\bm{e}} ^{m+\mu} \odot \tilde{\bm{e}} ^{m+\mu} \right)
\end{align*}
where, in terms of the local coordinates $\left\{ \xi_\mu, t_k \right\}$,
\begin{align*}
  \tilde{\bm{e}} ^\mu & = \left(\frac{\Delta_\mu}{\Theta _\mu}\right) ^{1/2} \dd \xi_\mu, &
  \tilde{\bm{e}} ^{m + \mu} & = \left(\frac{\Theta_\mu}{\Delta _\mu}\right) ^{1/2} \sum_{k=1} ^{m} \sigma_\mu ^{(k)} \dd
  t _k,
\end{align*}
with
\begin{align*}
\Delta_\mu  & = \prod_{\substack{\nu=1\\\nu \neq \mu}} ^{m}
(\xi_\nu -
\xi_\mu), &
\sigma_\mu ^{(k)} & =
\sum_{\substack{\nu_1 < \nu_2 < \ldots < \nu_k \\ \nu_i \neq \mu}}  \xi_{\nu_1} \xi_{\nu_2}
\ldots \xi_{\nu_k}, & \sigma^{(k)} & =
\sum_{\nu_1 < \nu_2 < \ldots < \nu_k}  \xi_{\nu_1} \xi_{\nu_2}
\ldots \xi_{\nu_k},
\end{align*}
and where $\Theta _\mu = \Theta _\mu (\xi_\mu)$ are functions of one
variable. In this limiting case the almost-hermitian structure
$\bm{\omega} = \sum_\mu \bm{e} ^\mu \wedge \bm{e} ^{m+\mu}$ becomes 
\begin{align*}
        \bm{\omega} = \sum _{k=1}^m \dd \sigma^{(k)} \wedge \dd t_k,
\end{align*}
which is closed, and hence, $\bm{\omega}$ is K\"{a}hler.

The metric $\bm{g}$ is thus K\"{a}hler, and turns out to be
Ricci-flat. Further, as pointed out in
\cites{Martelli2005,Hamamoto2007}, it is identical to the
\emph{orthotoric} metric that has been found independently in
\cite{Apostolov2006}.  Such metrics are characterised by the existence of a Hamiltonian
  $2$-form.
The Hamiltonian $2$-form for the above metric is given
explicitly by 
\begin{align} \label{HamKNA}
        \bm{\psi} = \sum_\mu \xi_\mu \tilde{\bm{e}}^\mu \wedge \tilde{\bm{e}} ^{m+\mu}.
\end{align}
The striking parallel between $*$-Killing $2$-forms on Einstein manifolds and Hamiltonian $2$-forms on K\"{a}hler manifolds leads us to the natural question of whether the latter play a r\^{o}le similar to that of the former in the context of integrable isotropic distributions. The answer to this question is yes, and the result follows directly by a computation of the components of the connection $1$-form as in the proof of Theorem \ref{eventhm}.

\begin{thm} \label{Hamfol}
        Let $(M, \bm{g}, \bm{J}, \bm{\omega})$ be a $2m$-dimensional K\"{a}hler manifold equipped with a non-degenerate Hamiltonian $2$-form $\bm{\psi}$ with distinct eigenvalues. Then, the $2^m$ maximal isotropic distributions associated to $\bm{\psi}$ are integrable and define $2^m$ distinct complex structures.
\end{thm}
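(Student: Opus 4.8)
The plan is to follow the proof of Theorem~\ref{eventhm} essentially verbatim, with the conformal Killing--Yano equation \eqref{CKYeq} replaced by the Hamiltonian $2$-form equation \eqref{Hamil2}, the K\"{a}hler identity $\nabla\bm{J}=0$ now doing the work that hypothesis \eqref{tauintcond} did there. First I would record the normal form. Since $\bm{\psi}$ is a $(1,1)$-form, the associated endomorphism $\psi \ind{^a _b}=g \ind{^{ac}}\psi \ind{_{cb}}$ is skew with respect to $\bm{g}$ and commutes with $\bm{J}$, so on each tangent space it is a skew-Hermitian operator on the Hermitian vector space $(T_pM,\bm{g},\bm{J})$; under the distinct-eigenvalue hypothesis this yields, exactly as in Lemma~\ref{normform}, a unitary coframe $\{\bm{\theta}^\mu,\bm{\theta}_\mu=\overline{\bm{\theta}^\mu}\}$ in which $\bm{g}=\sum_\mu\bm{\theta}^\mu\odot\bm{\theta}_\mu$, $\bm{\omega}=\ii\sum_\mu\bm{\theta}^\mu\wedge\bm{\theta}_\mu$ and $\bm{\psi}=\ii\sum_\mu\xi_\mu\,\bm{\theta}^\mu\wedge\bm{\theta}_\mu$, with $\sigma=\tr_{\bm{\omega}}\bm{\psi}=\sum_\mu\xi_\mu$. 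The $2^m$ maximal isotropic distributions and $2^m$ almost complex structures attached to $\bm{\psi}$ are then defined just as in the even-dimensional case, $\pm\bm{J}$ being two of them.

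Let $\{\bm{V}_\mu,\bm{V}^\mu\}$ be the dual frame, with $\bm{V}_\mu$ spanning $T^{1,0}$ and $\bm{V}^\mu=\overline{\bm{V}_\mu}$ spanning $T^{0,1}$. Because $\nabla\bm{J}=0$, the Levi--Civita connection preserves the splitting $T_\C M=T^{1,0}\oplus T^{0,1}$, and one checks at once that \emph{every} connection coefficient appearing in \eqref{int2m} is forced to vanish by this alone, the sole exceptions being the off-diagonal coefficients describing how the connection acts within $T^{1,0}$ (and their conjugates within $T^{0,1}$) --- schematically $\Gamma \ind{_a _\mu ^\nu}$ with $\mu\neq\nu$, $a\neq\mu,\nu$, and $a$ ranging over either a $\bm{V}_\kappa$ or a $\bm{V}^\kappa$ direction; metric compatibility then pairs off the two $\Gamma$'s produced when \eqref{Hamil2} is hit with a given frame triple. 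So it remains only to show these coefficients vanish, and this is where \eqref{Hamil2} enters: evaluating it on a triple $(\bm{W},\bm{V}_\mu,\bm{V}^\nu)$ with $\bm{W}$ one of $\bm{V}_\kappa,\bm{V}^\kappa$ and $\kappa,\mu,\nu$ distinct, the left-hand side collapses, using that $\bm{\psi}$ is diagonal and $\nabla\bm{g}=0$, to a nonzero multiple of $(\xi_\mu-\xi_\nu)$ times the coefficient in question, while the right-hand side vanishes because $\bm{\omega}$ is diagonal in the $\bm{\psi}$-eigenframe (commuting operators) and $\bm{g}$ pairs $\bm{\theta}^\mu$ with $\bm{\theta}_\mu$, so the combination $\dd\sigma\wedge\bm{J}(\bm{X}^*)-\bm{J}(\dd\sigma)\wedge\bm{X}^*$ has no component with three mutually distinct frame indices. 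Since the eigenvalues are distinct, $\xi_\mu-\xi_\nu\neq0$ and the coefficient vanishes; along the way one also recovers, as in \eqref{consteval}, that $\bm{V}_\nu(\xi_\mu)=\bm{V}^\nu(\xi_\mu)=0$ for $\nu\neq\mu$, the remaining surviving coefficients being the harmless diagonal ones, analogues of \eqref{LC-}--\eqref{LC+}.

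Having verified \eqref{int2m}, the argument concludes exactly as at the end of the proof of Theorem~\ref{eventhm}: $\nabla_{\bm{V}_\mu}(\bm{\theta}_1\wedge\cdots\wedge\bm{\theta}_m)=\sum_\nu\Gamma \ind{_{\mu \nu} ^\nu}\,\bm{\theta}_1\wedge\cdots\wedge\bm{\theta}_m$, so $\dd\bm{\Omega}=\bm{\alpha}\wedge\bm{\Omega}$ for $\bm{\Omega}=\bm{\theta}_1\wedge\cdots\wedge\bm{\theta}_m$ and $\langle\bm{V}_1,\ldots,\bm{V}_m\rangle$ is Frobenius integrable; all $2^m$ distributions stand on an equal footing, being interchanged by the relabellings $(\bm{\theta}^\mu,\bm{\theta}_\mu)\mapsto(\bm{\theta}_\mu,\bm{\theta}^\mu)$ under which $\bm{\psi}$ stays in normal form with the still-distinct eigenvalues, so every one of them is integrable. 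Finally, since the signature is definite, $\D\cap\bar\D=\{0\}$ for each of these maximal isotropic distributions, so each defines a complex structure for which $\bm{g}$ is Hermitian and which, being involutive, is integrable (equivalently, by Newlander--Nirenberg, comes from a genuine complex-manifold structure); these are generically distinct, coinciding only where some $\xi_\mu$ degenerate.

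The step I expect to take the most care is the verification, in the second paragraph, that the right-hand side of \eqref{Hamil2} really has no component with three distinct frame indices: expanding \eqref{Hamil2} in the $\bm{\psi}$-eigenframe is more laborious than expanding \eqref{CKYeq}, since its right-hand side carries explicit $\bm{J}$'s and a factor of $\bm{\omega}$ that must be resolved into $\{\bm{\theta}^\mu,\bm{\theta}_\mu\}$-components. A cleaner route, which I would try first, is to show directly from \eqref{Hamil2} that $\dd\bm{\psi}=-\dd\sigma\wedge\bm{\omega}$ up to a universal constant --- the $\bm{J}(\dd\sigma)\wedge\bm{X}^*$ term drops out of the totally antisymmetric part because $\bm{g}$ is symmetric --- which exhibits $\bm{\psi}$ as ``closed modulo $\bm{\omega}$'', so that $\dd\bm{\psi}$ has exactly the index type of the $*$-Killing (closed conformal Killing--Yano) case already treated and the analogue of \eqref{tauintcond} is immediate.
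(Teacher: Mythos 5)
Your overall route is the paper's: put $\bm{\psi}$ in normal form in a frame adapted to $\bm{J}$, expand the defining equation \eqref{Hamil2} in that eigenframe, and use distinctness of the eigenvalues to force the offending connection coefficients to vanish --- this is exactly the paper's equations \eqref{Ham1a}--\eqref{Ham1c} and \eqref{Cond+}. Your only genuine variant is to obtain the type-mixing vanishings ($\Gamma \ind{_{\kappa \mu \nu}}$, $\Gamma \ind{^\kappa _{\mu \nu}}$, etc.) from $\nabla \bm{J}=0$ rather than from \eqref{Ham1b}--\eqref{Ham1c}, which is fine and even spares you the hypothesis $\lambda_\mu+\lambda_\nu\neq 0$.

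There is, however, a gap in your case analysis. The coefficients you must still kill are \emph{not} only those $\Gamma \ind{_a _\mu ^\nu}$ with $a\neq\mu,\nu$: the Frobenius conditions \eqref{Riccirot} for the \emph{mixed} distributions (those containing both $\bm{V}_\mu$ and $\bm{V}^\nu$) require the $\bm{V}_\nu$-component and the $\bm{V}^\mu$-component of $[\bm{V}_\mu,\bm{V}^\nu]$ to vanish. The $\bm{V}_\nu$-component equals $\Gamma \ind{_\mu ^{\nu \nu}}-\Gamma \ind{^\nu _\mu ^\nu}$; the first term dies by $\nabla\bm{J}=0$, but $\Gamma \ind{^\nu _\mu ^\nu}$ (and likewise $\Gamma \ind{_\mu ^\nu _\mu}$) is a within-$T^{1,0}$ (resp.\ $T^{0,1}$) coefficient that the K\"ahler condition does not touch, and your verification, restricted to triples $(\bm{W},\bm{V}_\mu,\bm{V}^\nu)$ with all three labels distinct, never reaches it --- note these are precisely of the kind that can survive, cf.\ the nonzero diagonal coefficient $\Gamma \ind{_\nu _\mu ^\nu}$ in \eqref{LC-}. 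The repair is immediate and uses nothing new: evaluate \eqref{Hamil2} also on $(\bm{V}^\nu,\bm{V}_\mu,\bm{V}^\nu)$ and $(\bm{V}_\mu,\bm{V}_\mu,\bm{V}^\nu)$. Since $\bm{\omega}$ and $\bm{g}$ pair a frame direction only with its conjugate, the right-hand side of \eqref{Hamil2} with second and third slots $(\bm{V}_\mu,\bm{V}^\nu)$ is nonzero only when the derivative slot is $\bm{V}^\mu$ or $\bm{V}_\nu$ --- exactly the harmless diagonal coefficients --- so for these extra triples the right-hand side still vanishes and the left-hand side is again $(\xi_\mu-\xi_\nu)$ times the coefficient in question, forcing it to zero. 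With that amendment your proof is complete and coincides with the paper's. One further caution on your proposed ``cleaner route'': knowing $\dd\bm{\psi}\propto\dd\sigma\wedge\bm{\omega}$ does not allow you to cite Theorem \ref{eventhm}, since $\bm{\psi}$ itself is not a conformal Killing--Yano tensor; the paper's corresponding observation proceeds instead through $\bm{\phi}=\bm{\psi}-\tfrac{1}{2}\sigma\bm{\omega}$ and equation \eqref{clcocl}, and even there one must check that $\bm{\phi}$ remains non-degenerate with distinct eigenvalues before Theorem \ref{eventhm} applies.
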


\begin{proof}
	The defining equation \eqref{Hamil2} for the Hamilton $2$-form gives in terms of the null basis
	\begin{subequations}
	\begin{align}
		\partial \ind{_\kappa} \psi \ind{_\mu ^\nu} + (\lambda_\mu - \lambda_\nu) \Gamma \ind{_\kappa _\mu ^\nu} & = - \frac{\ii}{2}\delta \ind*{_\kappa ^\nu} \partial \ind{_\mu} \sigma \label{Ham1a}\\
		(\lambda_\mu + \lambda_\nu) \Gamma \ind{^\kappa _\mu _\nu} & = 0 \label{Ham1b}\\
		(\lambda_\mu + \lambda_\nu) \Gamma \ind{_\kappa _\mu _\nu} & = 0 \label{Ham1c}
	\end{align}
	\end{subequations}
Equation \eqref{Ham1a} implies further
	\begin{subequations}
	\begin{align}
		\partial _\mu \lambda _\mu & = - \frac{\ii}{2} \partial _\mu \sigma \\
		\partial _\nu \lambda _\mu & = 0 & \mbox{for all $\nu \neq \mu$}\\
		(\lambda_\mu - \lambda_\nu) \Gamma \ind{_\nu _\mu ^\nu} & = - \frac{\ii}{2} \partial _\mu \sigma & \mbox{for all $\nu \neq \mu$}\\
		(\lambda_\mu - \lambda_\nu) \Gamma \ind{_\kappa _\mu ^\nu} & = 0 & \mbox{for all distinct $\kappa$, $\mu$, $\nu$}. \label{Cond+}
	\end{align}
	\end{subequations}
Since by assumption the eigenvalues are distinct, the integrability conditions \eqref{int2m} are satisfied.
\end{proof}

We also know \cites{Apostolov2006,Semmelmann2003} that given a Hamiltonian $2$-form $\bm{\psi}$, the $2$-form $\bm{\phi}$ defined by
\begin{align*}
        \bm{\phi} \equiv \bm{\psi} - \frac{1}{2} \sigma \bm{\omega}
\end{align*}
is a conformal Killing $2$-form. Such a $\bm{\phi}$ will not be closed in general. In fact,
\begin{align}
        \dd \bm{\phi} = - \frac{3}{n-1} \bm{\omega} \wedge \bm{J}(\dd^* \bm{\phi}), \label{clcocl}
\end{align}
so that $\bm{\phi}$ is closed iff it is co-closed iff it is parallel. On examining equation \eqref{LC+} and the eigenvalues of $\bm{\phi}$, one can see that equation \eqref{clcocl} is equivalent to the vanishing of the connection components $\Gamma \ind{^\nu _\mu _\nu}$ and $\Gamma \ind{_\nu ^\mu ^\nu}$ as implied by equation \eqref{Cond+}. This thus provides an example of a non-closed conformal Killing-Yano tensor which gives rise to $2^m$ integrable complex structures.

\section{Foliating spinors}
In four dimensions, maximal isotropic planes correspond to spinors up
to scale, \cite{Penrose1986}, and so spinors provide an efficient and
convenient calculus for studying such isotropic planes.  In higher
dimensions, spinors are less efficient as spin spaces grow in
dimension exponentially, and the condition that a spinor is `pure',
i.e., that it corresponds to a maximal isotropic plane becomes
non-trivial.  Nevertheless, they form a natural formalism for
understanding these structures.  In particular, the generalized Kerr
theorem \cites{Hughston1988} shows that maximally isotropic foliations
of complexified flat space-time are in $1:1$ correspondence with
holomorphic $m$-surfaces in twistor space and this can be identified
with the bundle of pure spinors over a Euclidean signature real slice.
In the following we give a spinor formulation of our previous results.
This will perhaps also be of benefit in answering other spinorial questions,
such as, for example, the separation of variables in spinor equations
in a space-time with a Killing-Yano tensor.

In this section, we first recall the basic facts of spin geometry. Our
exposition is based on various sources \cites{Gualtieri2003,
  Penrose1986, Lawson1989}, and we have harmonised the different
approaches as far as possible.  We then recast the previous results in
the language of spinors.

\subsection{The split signature model}
Let $V$ be an $m$-dimensional real vector space with dual
$V^*$, and consider the direct sum $V \oplus V^*$ endowed with an
inner product $\bm{g}$ of split signature $(m,m)$. One can
always find a null basis $\left\{ \bm{\theta}_a \right\} _{a=1}
^{2m} = \left\{ \bm{\theta}_{\mu}, 
\bm{\theta}^{\mu} \right\} _{\mu=1} ^m $ of $V \oplus V^*$, i.e.
\begin{align*}
  \bm{g} ( \bm{\theta}^{\mu}, \bm{\theta}_{\nu} ) = \delta _\nu
  ^\mu \qquad \mbox{and} \qquad \bm{g} ( \bm{\theta}^{\mu},
  \bm{\theta}^{\nu} ) = 0 = \bm{g} ( \bm{\theta}_{\mu},
  \bm{\theta}_{\nu}   ),
\end{align*}
for all $\mu, \nu$, so that the inner product is given by
\begin{align*}
  \bm{g} = \sum 2 \bm{\theta} \ind{^{\mu}} \odot \bm{\theta}
  \ind{_{\mu}}.
\end{align*}

The canonical basis for the exterior algebra $\bigwedge^\bullet (V
\oplus V^*)$ is induced from the basis of $V \oplus V^*$, and we will
often use the notation 
\begin{align*}
        \bm{\theta} \ind{^{\mu_1 \ldots \mu_p} _{\nu_1 \ldots \nu_q}}
        & \equiv \bm{\theta} \ind{^{\mu_1}} \wedge \ldots \wedge
        \bm{\theta} \ind{^{\mu_p}} \wedge \bm{\theta} \ind{_{\nu_1}}
        \wedge \ldots \wedge \bm{\theta} \ind{_{\nu_q}},\\ 
        \bm{\theta} \ind{^{a_1 \ldots a_{p+q}}} & \equiv \bm{\theta}
        \ind{^{a_1}} \wedge \ldots \wedge \bm{\theta}  \ind{^{a_{p+q}}}, 
\end{align*}
where $1 \leq \mu_i, \nu_i,  a_i  \leq m$. We denote by $\bm{1}$ the
basis element of $\bigwedge^0 (V \oplus V^*) \cong \mathbb{R} \cong
\bigwedge^0 V  \cong \bigwedge^0 V^*$. 

\begin{rem}
  Any even-dimensional real vector space with a positive definite
  metric $\bm g$, once complexified, admits a splitting $V \oplus \bar{V}$
  where the anti-holomorphic subspace $\bar{V}$ is isotropic and can
  be identified with the dual space $V^*$ via the Hermitian inner
  product induced by $\bm g$. If $\left\{ \bm{\theta}^a \right\} _{a=1} ^{2m} \equiv
  \left\{ \bm{\theta}^{\mu}, \bm{\bar{\theta}}^{\bar{\mu}}
  \right\}_{\mu, \bar{\mu}=1, \ldots, m}$ are the complex basis
  $1$-forms, then
\begin{align*}
  \bm{g}: \bm{\bar{\theta}} _{\bar{\mu}} \rightarrow \bm{\theta}
  ^\mu,
\end{align*}
where
\begin{align*}
  \bm{g} = \sum 2 \bm{\theta}^{\mu} \odot \bm{\bar{\theta}}
  ^{\bar{\mu}}.
\end{align*}
Similarly we can see that all of the subsequent results on $V \oplus
V^*$ apply to all signatures on the understanding that they will need
to be applied to the complexification of 
$V \oplus V^*$.
\end{rem}

A subspace $N$ of $V \oplus V^*$ such that $N \subseteq N^\perp$ is
called \emph{isotropic} and \emph{maximal isotropic} when strict
equality holds. Under the action of the Hodge duality operator, the space of all maximal isotropic subspaces splits
into \emph{self-dual} (SD) and \emph{anti-self-dual} (ASD) 
components. When $V \oplus V^*$ is complexified we have a one-to-one correspondence between SD (ASD) maximal isotropics and orthogonal complex structures with positive (negative) orientation.

\subsection{Spin representation}
The spin representation $\mathbb{S}$ of the special orthogonal group $\SO(V
\oplus V^*)$ is a $2^m$-dimensional vector space, which splits into two $2^{m-1}$-dimensional
irreducible representations $\mathbb{S}^+$ and $\mathbb{S}^-$. These are the chiral
spin representations of $\SO(V \oplus V^*)$. We shall give two alternative approaches to the theory of spinors, both of which will be used in the present paper according to the context.

The Clifford algebra can be regarded as a matrix algebra consisting of \emph{$\gamma$-matrices} satisfying the Clifford equation
\begin{align*}
        \gamma _a \gamma _b + \gamma _b \gamma _a = - 2 g _{a b} \mathbf{I},
\end{align*}
where $\mathbf{I}$ is the identity on $\mathbb{S}$.  Introduce a basis $\left\{ \bm{\theta}_\alpha \right\} = \left\{ \bm{\theta}_A, \bm{\theta}_{A'} \right\}$ of $\mathbb{S}=\mathbb{S}^+ \oplus \mathbb{S}^-$, so that lower-case Greek indices (beginning of the alphabet) running from $1$ to $2^m$ refer to  $\mathbb{S}$, and unprimed and primed upper-case Roman indices running from $1$ to $2^{m-1}$ to $\mathbb{S}^+$ and $\mathbb{S}^-$ respectively, with $\alpha = A \oplus A'$, and similarly for the dual spin spaces.
The action on each of the chiral spin spaces $\mathbb{S}^\pm$ can similarly be expressed in terms of `reduced' $\hat{\gamma}$- and $\check{\gamma}$-matrices
\begin{align*}
        \gamma \ind{_a _\alpha ^\beta} & =
                \begin{pmatrix}
                        0                       &
                        \hat{\gamma} \ind{_a _A ^{B'}} \\ 
                        \check{\gamma} \ind{_a _{A'} ^B}        &       0
                \end{pmatrix}
\end{align*}
satisfying the relations
\begin{align*}
        \hat{\gamma} \ind{_a} \check{\gamma} \ind{_b} + \hat{\gamma}
        \ind{_a} \check{\gamma} \ind{_b}  = -2 g \ind{_{a b}}
        \mathbf{I}^+ \qquad \mbox{and} \qquad         \check{\gamma}
        \ind{_a} \hat{\gamma} \ind{_b} + \check{\gamma} \ind{_b}
        \hat{\gamma} \ind{_a} = -2 g \ind{_{a b}} \mathbf{I}^-, 
\end{align*}
where $\mathbf{I}^\pm$ are the identity endomorphisms on
$\mathbb{S}^\pm$. In the following statements, $\hat{\gamma}$- and
$\check{\gamma}$-matrices may be substituted for $\gamma$-matrices in
an appropriate way.  Thus, one can express Clifford multiplication $\cdot$, i.e.
the action of the Clifford group on
$\mathbb{S}$, as follows: given a vector $\bm{V} = V ^a \bm{\theta}
_a$ and a spinor $\bm{\zeta}$, then 
\begin{align*}
\bm{V} \cdot \bm{\zeta} & = V ^a  \gamma_a \bm{\zeta}.
\end{align*}

On the other hand, we note that $\mathbb{S}$ is isomorphic as a vector
space to the exterior algebra $\bigwedge^\bullet V^*$. More precisely,
$\mathbb{S}^+$ and $\mathbb{S}^-$ are isomorphic to
$\bigwedge^{\mathrm{even}} V^*$ and 
$\bigwedge^{\mathrm{odd}} V^*$ with canonical bases $\left\{\bm{1},
  \bm{\theta} ^{\mu_1 \ldots \mu_p} : \mbox{$p$ even} \right\}$ and
$\left\{\bm{\theta} ^{\mu_1 \ldots \mu_p}: \mbox{$p$ odd} \right\}$
respectively. In this setting, Clifford multiplication is given
explicitly by\footnote{Our convention differs from
  \cite{Gualtieri2003} where the Clifford multiplication squares to
  \emph{plus} the norm squared.} 
\begin{align*}
  (\bm{X} + \bm{\xi}) \cdot \bm{\zeta} = - \bm{X} \lrcorner \bm{\zeta} +
  \bm{\xi} \wedge \bm{\zeta}
\end{align*}
for all vectors $\bm{X} + \bm{\xi} \in V \oplus V^*$.

The advantage of the former approach is conciseness of notation when dealing with purely spinorial quantities. However, the latter provides more practical  tools when it comes to computations in arbitrary dimensions. We shall set up a convenient dictionary between the two formalisms by identifying  each of the basis elements $\left\{ \bm{\theta}_\alpha \right\}$ with each of the basis elements $\left\{ \bm{1}, \bm{\theta} ^{\mu_1 \ldots \mu_p} \right\}$ of $\mathbb{S} \cong \bigwedge ^\bullet$ as follows.
\begin{align*}
        \bm{\theta} _\alpha & \leftrightarrow \bm{1}, \bm{\theta} ^{\mu_1 \ldots \mu_p} & \mbox{for any $1 \leq p \leq m$}\\
        \bm{\theta} _A & \leftrightarrow \bm{1}, \bm{\theta} ^{\mu_1 \ldots \mu_p}  & \mbox{for any even $1 \leq p \leq m$}\\
        \bm{\theta} _{A'} & \leftrightarrow \bm{\theta} ^{\mu_1 \ldots \mu_p} & \mbox{for any odd $1 \leq p \leq m$}.
\end{align*}
One may regard the indices $\alpha$, $A$, and $A'$ as labels for a group of indices $\mu_0 \ldots \mu_p$. Both types of bases will be regarded as the canonical bases of $\mathbb{S}$, $\mathbb{S}^+$ and $\mathbb{S}^-$ induced from the basis of $V$.

Since the Clifford algebra is isomorphic to the exterior algebra
$\bigwedge^\bullet (V \oplus V^*)$, one can extend the Clifford
multiplication to any elements of $\bigwedge ^\bullet (V \oplus V^*)$.
Writing $\gamma \ind{_{{a_1} \ldots {a_p}}} = \gamma \ind{_{\lb{a_1}}}
\ldots \gamma \ind{_{\rb{a_p}}}$, and employing the summation
convention from hereon until the end of the subsection, then, for any
$p$-form $\bm{\phi}= 
\phi _{a_1 \ldots a_p} \bm{\theta} ^{a_1 \ldots a_p}$ and any spinor
$\bm{\zeta}$, we have $\bm{\phi} \cdot \bm{\zeta} = \phi _{a_1 \ldots
  a_p} \gamma ^{a_p \ldots a_1} \bm{\zeta}$. Of particular importance
is the Lie algebra $\mathfrak{so}(V \oplus V^*)$, which is isomorphic
to $\bigwedge^2 (V \oplus V^*)$.  Any element $\bm{\phi}$ of
$\mathfrak{so}(V \oplus V^*)$ admits the 
decomposition
\begin{align}
  \bm{\phi} =
  \begin{pmatrix}
    \bm{A} & \bm{\beta} \\
    \bm{B} & -\bm{A}^*
  \end{pmatrix},
\end{align}
where $\bm{A} \in \End{V}$, and $\bm{\beta} \in \Hom(V^*,V)$ and
$\bm{B} \in \Hom(V,V^*)$ are skew. The action of $\bm{\phi}$ on
spin space $\mathbb{S}$ is then given by
\begin{align} \label{Clifact}
  \bm{B} \cdot \bm{\zeta} & = B _{\mu \nu} \bm{\theta}^\nu \wedge (\bm{\theta}^\mu \wedge \bm{\zeta}) =
- \bm{B} \wedge \bm{\zeta} & (B _{\mu \nu} & = B _{[\mu \nu]})\nonumber \\
  \bm{\beta} \cdot \bm{\zeta} & = \bm{\beta}^{\mu \nu} \bm{\theta} _\nu \lrcorner
  (\bm{\theta} _\mu
\lrcorner \bm{\zeta}) = \bm{\beta} \lrcorner \bm{\zeta} & (\beta ^{\mu \nu} & = \beta ^{[\mu \nu]})\\
  \bm{A} \cdot \bm{\zeta} & = A \ind{^\nu _\mu}
  \bm{\theta} ^\mu
\wedge (\bm{\theta} _\nu \lrcorner \bm{\zeta}) - \frac{1}{2} \tr \bm{A} \bm{\zeta} = \bm{A}^*
\bm{\zeta} - \frac{1}{2} \tr \bm{A} \bm{\zeta} & (A \ind{^\nu _\mu} & = - A \ind{_\mu ^\nu}) \nonumber,
\end{align}
for any spinor $\bm{\zeta}$.

Spin space $\mathbb{S}$ is equipped with an inner product $<\cdot,\cdot>$ which is symmetric or anti-symmetric according to $m$. This inner product descends to an inner product on each of the chiral spin spaces $\mathbb{S}^\pm$ when $m$ is even, but it is degenerate on $\mathbb{S}^\pm$ when $m $ is odd, in which case it gives rise to an isomorphism between a space of one chirality and the dual of the space of the opposite chirality, i.e. $\mathbb{S}^{\pm} \cong \left( \mathbb{S}^{\mp} \right)^*$. In general, given any two spinors $\bm{\eta}$ and $\bm{\zeta}$ one can define a $p$-form $\bm{\phi}$ by
\begin{align*}
        \bm{\phi} & = < \bm{\eta},  \bm{\theta} \ind{^{a_1 \ldots a_p}} \gamma \ind{_{a_1 \ldots a_p}} \bm{\zeta} >.
\end{align*}

\subsection{Maximal isotropic planes and pure spinors}
To any non-zero spinor $\bm{\zeta}$ one can associate  an isotropic
subspace given by
\begin{align*}
  N ({\bm{\zeta}}) = \left\{ \bm{X}+\bm{\xi} \in V \oplus V^* : (\bm{X} + \bm{\xi}) \cdot \bm{\zeta} = 0
  \right\},
\end{align*}
and any element $ \bm{X}+\bm{\xi} \in N ({\bm{\zeta}})$ has the form
\begin{align*}
         \bm{X}+\bm{\xi} = < \bm{\eta},  \bm{\theta} _a \gamma ^a \bm{\zeta} >
\end{align*}
for some spinor $\bm{\eta}$. If $N ({\bm{\zeta}})$ is maximal we say that $\bm{\zeta}$ is a
\emph{pure} spinor. In particular, the $2^m$ basis elements $\left\{ \bm{1}, \bm{\theta} ^{\mu_1 \ldots \mu_p} \right\}$ of $\mathbb{S}$ are pure with associated maximal
isotropic planes
\begin{align*}
N ({\bm{1}}) & =
 \left\{ \bm{\theta} _{1}, \ldots,  \bm{\theta} _{m} \right\} \oplus \left\{ \bm{0} \right\},\\
  N ({\bm{\theta} ^{\mu_1 \ldots \mu_p}}) & =
 \left\{  \bm{\theta} _{\mu_{p+1}}, \ldots, \bm{\theta} _{\mu_{m}}
  \right\} \oplus
 \left\{ \bm{\theta} ^{\mu_1}, \ldots,  \bm{\theta} ^{\mu_p} \right\},\\
N ({\bm{\theta} ^{1 \ldots m}}) & =
        \left\{ \bm{0} \right\} \oplus \left\{ \bm{\theta} ^{1}, \ldots,  \bm{\theta} ^{m} \right\},
\end{align*}
where $\mu_i \neq \mu_j$ for all $i \neq j$.

One can show that a spinor is pure if and only if it is chiral. We
denote the space of $\pm$ chiral pure spinors by $\mathbb T^\pm$. The
projective pure spinors $\mathbb{PT}^\pm$ are the spaces $\mathbb
T^\pm$ defined up to scalings.  There is a one-to-one correspondence
between projective pure spinors of a given chirality and maximal
isotropic planes of a given duality. The next proposition is a direct
consequence of equation \eqref{Clifact}.
\begin{prop} \label{espinpure}
  Let $\bm{\phi}$ be an element of $\mathfrak{so}(V \oplus V^*)$ such that $\bm{\phi}$ is diagonal in the null basis $\left\{ \bm{\theta}^\mu,
\bm{\theta}_\mu \right\}$, i.e.
\begin{align} \label{stKdiag}
\bm{\phi} = \sum_\mu \lambda _\mu \bm{\theta} ^{\mu} \wedge
\bm{\theta} _{\mu},
\end{align}
for some $\lambda _\mu$.
Then the eigenspinors of $\bm{\phi}$ regarded as an endomorphism on
$\mathbb{S}$ are simply the basis elements of $\bigwedge V^* (= \mathbb{S})$, i.e.
\begin{align*}
        \bm{\phi} \cdot \bm{1} & = \tilde{\lambda} _{0} \bm{1}, &
  \bm{\phi} \cdot \bm{\theta} ^{\mu_1 \ldots \mu_p} & = \tilde{\lambda} _{\mu_1 \ldots \mu_p} \bm{\theta} ^{\mu_1 \ldots \mu_p}
\end{align*}
where the eigenvalues $\tilde{\lambda} _0$, $\tilde{\lambda} _{\mu_1 \ldots \mu_p}$ are given in terms of the eigenvalues
$\lambda_\mu$ by
\begin{align*}
   \tilde{\lambda}_{0} & = - \frac{1}{2} \sum_\mu \lambda_\mu,
& \tilde{\lambda} _{\mu_1 \ldots \mu_p} & = - \frac{1}{2} \sum_\mu
(-1)^\epsilon \lambda_\mu
\end{align*}
where $\epsilon = \sum_{i=1}^p\delta \ind*{_\mu ^{\mu_i}}$. It follows that the
eigenspinors of $\bm{\phi}$ are pure. 
\end{prop}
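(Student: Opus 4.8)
The plan is to read the action of $\bm{\phi}$ on $\mathbb{S}$ straight off \eqref{Clifact}. First I would locate $\bm{\phi}=\sum_\mu\lambda_\mu\bm{\theta}^\mu\wedge\bm{\theta}_\mu$ inside the block decomposition of $\mathfrak{so}(V\oplus V^*)$: since each $\bm{\theta}^\mu\in V^*$ and each $\bm{\theta}_\mu\in V$, every wedge $\bm{\theta}^\mu\wedge\bm{\theta}_\mu$ lies in the $V\otimes V^*\cong\End V$ summand, so the $\bm{\beta}$- and $\bm{B}$-blocks of $\bm{\phi}$ vanish and $\bm{\phi}$ is represented by the diagonal endomorphism $\bm{A}=\diag(\lambda_1,\ldots,\lambda_m)$ of $V$ in the basis $\{\bm{\theta}_\mu\}$, i.e.\ the matrix $A^{\nu}{}_{\mu}=\lambda_\mu\delta^\nu_\mu$. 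Substituting this into the third line of \eqref{Clifact} gives, for any $\bm{\zeta}\in\bigwedge^\bullet V^*\cong\mathbb{S}$,
\[
\bm{\phi}\cdot\bm{\zeta}=\sum_\mu\lambda_\mu\,\bm{\theta}^\mu\wedge(\bm{\theta}_\mu\lrcorner\bm{\zeta})-\tfrac12\Bigl(\sum_\mu\lambda_\mu\Bigr)\bm{\zeta}.
\]

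Next I would evaluate this on each canonical basis spinor. On $\bm{1}$ every contraction $\bm{\theta}_\mu\lrcorner\bm{1}$ vanishes, so $\bm{\phi}\cdot\bm{1}=-\tfrac12(\sum_\mu\lambda_\mu)\bm{1}=\tilde{\lambda}_0\bm{1}$. On $\bm{\theta}^{\mu_1\ldots\mu_p}$ the operator $\bm{\theta}^\mu\wedge(\bm{\theta}_\mu\lrcorner\,\cdot\,)$ is the projection onto the span of monomials containing the factor $\bm{\theta}^\mu$, hence acts as the identity when $\mu\in\{\mu_1,\ldots,\mu_p\}$ and as zero otherwise; therefore $\bm{\phi}\cdot\bm{\theta}^{\mu_1\ldots\mu_p}=\bigl(\sum_{i=1}^p\lambda_{\mu_i}-\tfrac12\sum_\mu\lambda_\mu\bigr)\bm{\theta}^{\mu_1\ldots\mu_p}$. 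A one-line rearrangement — splitting $\sum_\mu\lambda_\mu$ according to whether $\mu$ occurs among the $\mu_i$ — rewrites the bracket as $-\tfrac12\sum_\mu(-1)^\epsilon\lambda_\mu$ with $\epsilon=\sum_{i=1}^p\delta_\mu^{\mu_i}$, which is the asserted $\tilde{\lambda}_{\mu_1\ldots\mu_p}$. Since the $2^m$ monomials $\{\bm{1},\bm{\theta}^{\mu_1\ldots\mu_p}\}$ span $\mathbb{S}$, this simultaneously diagonalises $\bm{\phi}$ on $\mathbb{S}$ with the stated eigenvalues.

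Finally, the purity assertion is immediate from the data recorded just before the statement: $N(\bm{1})$, $N(\bm{\theta}^{\mu_1\ldots\mu_p})$ and $N(\bm{\theta}^{1\ldots m})$ are exhibited there as $m$-dimensional isotropic subspaces of $V\oplus V^*$, hence maximal isotropic, so each basis monomial is pure by definition; as the eigenspinors are exactly these monomials, the last sentence follows. I do not expect a serious obstacle here: the only points requiring care are the bookkeeping that places $\bm{\phi}$ in the $\bm{A}$-block with the right normalisation — in particular the $-\tfrac12\tr\bm{A}$ shift, which comes from the spin representation rather than the defining one — and the combinatorial identity turning $\sum_{i}\lambda_{\mu_i}-\tfrac12\sum_\mu\lambda_\mu$ into the compact alternating sum. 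Both are routine once \eqref{Clifact} is available, which is exactly why the proposition is "a direct consequence" of that formula.
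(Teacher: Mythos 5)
Your proposal is correct and follows exactly the route the paper intends: it reads $\bm{\phi}$ as the diagonal $\bm{A}$-block of $\mathfrak{so}(V\oplus V^*)$, applies the third line of \eqref{Clifact} to the monomial basis of $\bigwedge^\bullet V^*$, and invokes the explicit maximal isotropic annihilators $N(\bm{1})$, $N(\bm{\theta}^{\mu_1\ldots\mu_p})$ for purity — precisely the computation behind the paper's remark that the proposition is a direct consequence of \eqref{Clifact}. The bookkeeping (the $-\tfrac12\tr\bm{A}$ shift and the rearrangement into the alternating sum with $\epsilon=\sum_i\delta_\mu^{\mu_i}$) is carried out correctly, so nothing is missing.
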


\subsection{Twistor bundle and integrability condition}
Let $M$ be a real $2m$-dimensional (pseudo-) Riemannian spin manifold
so that at each point 
$p$, its complexified tangent space $\mathbb{C} \otimes T  _p M$ can be
given the structure of $\mathbb{C} \otimes (V \oplus V^*)$. The
preceding sections translate into the language of bundles in the
obvious way so that $V$, $\mathbb{S}$, $\mathbb{T}$, etc... will now
refer to bundles over the complexification $M_{\mathbb{C}}$ of $M$. We
extend the Levi-Civita covariant derivative $\nabla$ to a covariant
derivative on 
the spin bundle $\mathbb{S}$ -- also denoted $\nabla$. For any basis
spinor field $\bm{\theta}$ we have
\begin{align} \label{spinconn}
  \nabla \bm{\theta} = - \frac{1}{2} \sum \bm{\Gamma} _{a b} \bm{\theta} ^a \wedge \bm{\theta} ^b \cdot
  \bm{\theta},
\end{align}
where $\bm{\Gamma} \ind{_a ^b}$ is the Levi-Civita connection $1$-form on $T^* M$, and $\bm{\Gamma} _{a b} = \bm{\Gamma} _{[a b]}$.

Maximal isotropic distributions of $T M$
are in one-to-one correspondence with orthogonal almost complex
structures on $\mathbb{C} \otimes T M$ and sections of the projective twistor bundle
$\mathbb{PT}$ over $M_{\mathbb{C}}$  (i.e. projective pure spinor fields on $M$).
The Frobenius integrability condition can then be articulated as
follows.

\begin{prop}
 A maximal isotropic distribution or its
associated orthogonal almost complex structure is integrable if and only if the
associated projective pure spinor field $\bm{\zeta}$ satisfies
\begin{align} \label{prspintcond}
  \left( \nabla _{\bm{X}} \bm{\zeta} \right) \wedge \bm{\zeta} = 0, \quad \mbox{i.e.} \quad \nabla _{\bm{X}} \bm{\zeta} = f _{\bm{X}} \bm{\zeta},
\end{align}
for all vector fields $\bm{X} \in \Gamma (N_{\bm{\zeta}})$ and for some
function $f$ on the manifold depending on ${\bm{X}}$. In other words, integrable orthogonal almost complex structures correspond to
holomorphic sections of the projective twistor bundle. 
\end{prop}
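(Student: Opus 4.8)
The plan is to push the Frobenius condition through the Clifford action and reduce it to the connection-component computation already carried out in Lemma~\ref{total-geodesic}. First I would work locally and choose a null frame $\{\bm{V}_\mu,\bm{V}^\mu\}$ adapted to the maximal isotropic distribution $N_{\bm{\zeta}}$, i.e.\ with $N_{\bm{\zeta}}=\langle\bm{V}_1,\dots,\bm{V}_m\rangle$ throughout (complete any frame of $N_{\bm{\zeta}}$ to a null frame). Writing $\bm{1}$ for the basis spinor field of this frame in the model $\mathbb{S}\cong\bigwedge^\bullet V^*$, we have $N(\bm{1})=N_{\bm{\zeta}}$, and since a maximal isotropic subspace determines its projective pure spinor uniquely, $\bm{\zeta}=g\,\bm{1}$ for a nowhere-vanishing function $g$. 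Hence $\nabla_{\bm{X}}\bm{\zeta}=(\bm{X}\ln g)\,\bm{\zeta}+g\,\nabla_{\bm{X}}\bm{1}$, so condition \eqref{prspintcond} is equivalent to $\nabla_{\bm{X}}\bm{1}\in\langle\bm{1}\rangle$ for all $\bm{X}\in\Gamma(N_{\bm{\zeta}})$.

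Next I would compute $\nabla_{\bm{X}}\bm{1}$ from \eqref{spinconn} using the block decomposition \eqref{Clifact} of the $\mathfrak{so}$-valued connection $1$-form
\begin{align*}
  \bm{\Theta}_{\bm{X}} \;=\; \tfrac{1}{2}\,\bm{\Gamma}_{ab}(\bm{X})\,\bm{\theta}^a\wedge\bm{\theta}^b \;=\;
  \begin{pmatrix} \bm{A}_{\bm{X}} & \bm{\beta}_{\bm{X}} \\ \bm{B}_{\bm{X}} & -\bm{A}_{\bm{X}}^* \end{pmatrix}.
\end{align*}
By \eqref{Clifact}, $\bm{\beta}_{\bm{X}}\cdot\bm{1}=\bm{\beta}_{\bm{X}}\lrcorner\bm{1}=0$, $\bm{A}_{\bm{X}}\cdot\bm{1}=-\tfrac{1}{2}(\tr\bm{A}_{\bm{X}})\bm{1}$, and $\bm{B}_{\bm{X}}\cdot\bm{1}=-\bm{B}_{\bm{X}}$, so $\nabla_{\bm{X}}\bm{1}=\tfrac{1}{2}(\tr\bm{A}_{\bm{X}})\,\bm{1}+\bm{B}_{\bm{X}}$ with $\bm{B}_{\bm{X}}\in\bigwedge^2 V^*$. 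Thus \eqref{prspintcond} holds precisely when the lower block $\bm{B}_{\bm{X}}$ vanishes for every $\bm{X}\in\Gamma(N_{\bm{\zeta}})$. Since $\bm{B}_{\bm{X}}(\bm{V}_\nu)$ is the $V^*$-component of $\nabla_{\bm{X}}\bm{V}_\nu$, taking $\bm{X}=\bm{V}_\kappa$ this says exactly $\Gamma_{\kappa\mu\nu}=0$ for all $\kappa,\mu,\nu$, in the notation of Lemma~\ref{total-geodesic}.

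Finally I would invoke the argument in the proof of Lemma~\ref{total-geodesic}: $N_{\bm{\zeta}}$ is integrable iff the Ricci rotation coefficients $\omega_{\kappa\mu\nu}$ all vanish; since $\nabla$ is torsion-free, $\omega_{\kappa\mu\nu}=\Gamma_{\kappa\mu\nu}-\Gamma_{\mu\kappa\nu}$, while metric-compatibility (both $\bm{V}_\mu,\bm{V}_\nu$ null) gives $\Gamma_{\kappa\mu\nu}=-\Gamma_{\kappa\nu\mu}$, and a tensor symmetric in its first two and skew in its last two slots vanishes, so $\omega_{\kappa\mu\nu}=0\Leftrightarrow\Gamma_{\kappa\mu\nu}=0$. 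Chaining the three equivalences proves the proposition, the wedge formulation $(\nabla_{\bm{X}}\bm{\zeta})\wedge\bm{\zeta}=0$ being just linear dependence of $\nabla_{\bm{X}}\bm{\zeta}$ and $\bm{\zeta}$ in $\mathbb{S}$. For the closing assertion I would note that \eqref{prspintcond} is precisely the vanishing of the $(0,1)$-part of the differential of the section $\bm{\zeta}\colon M_{\mathbb{C}}\to\mathbb{PT}$ for the natural almost complex structure on the twistor bundle (horizontal lift via $\nabla$ together with the tautological complex structure on the fibres), i.e.\ holomorphicity of the section, which is essentially a matter of unwinding that definition.

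The step I expect to need most care is the claim that $\nabla_{\bm{X}}\bm{\zeta}$ has no form-degree-one component, i.e.\ that it lies in $\langle\bm{\zeta}\rangle\oplus\bigwedge^2 V^*$ — this is where purity of $\bm{\zeta}$ genuinely enters, since the tangent cone to the pure-spinor variety at $\bm{1}$ is $\bigwedge^0 V^*\oplus\bigwedge^2 V^*$. Passing to the adapted frame with $\bm{\zeta}=g\,\bm{1}$ disposes of this cleanly, but one must verify that such a frame exists and that the scaling freedom in $\bm{\zeta}$ is fully absorbed into $g$; once past this point, the rest is the same linear-algebra identity already used in Lemma~\ref{total-geodesic}.
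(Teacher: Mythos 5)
Your argument is correct and is essentially the computation the paper itself relies on: the paper states this proposition without a separate proof, and its content is exactly your chain of equivalences, namely that by \eqref{spinconn} and \eqref{Clifact} the non-$\langle\bm{\zeta}\rangle$ part of $\nabla_{\bm{X}}\bm{\zeta}$ in an adapted frame is the $\bigwedge^2 V^*$ block built from the components $\Gamma_{\kappa\mu\nu}$, whose vanishing is equivalent (torsion-freeness plus metric compatibility, as in Lemma \ref{total-geodesic}) to the Frobenius condition \eqref{Riccirot} -- precisely the equivalence recorded in Proposition \ref{intspinbasis} and the remark following it. No gaps; your care about purity (the frame with $\bm{\zeta}=g\,\bm{1}$) is exactly the right place to be careful and is handled correctly.
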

If one applies equation \eqref{prspintcond} to all the (projective) basis
elements of the spin bundle and adopts the following convention
\begin{align*}
\Gamma \ind{_{c a} ^b} = \bm{\theta} \ind{_c} \lrcorner
\bm{\Gamma} \ind{_a ^b}
\end{align*}
for the
components of $\bm{\Gamma} \ind{_a ^b}$, one then obtains
\begin{prop} \label{intspinbasis}
  All $2^m$ projective basis elements $\bm{1}$, $\bm{\theta} ^{\mu_1 \ldots \mu_p}$ of the projective twistor bundle  $\mathbb{PT} \subset \mathbb{PS} \cong \mathbb{P} (\bigwedge ^\bullet T V^*)$ are integrable if and only if the connection components
\begin{align} \label{connintcond}
  & \Gamma \ind{_{\kappa \mu \nu}}, \,
  \Gamma \ind{^{\kappa \mu \nu}}, \qquad (\mbox{for all
  $\mu,\nu,\kappa$}), \nonumber \\
  & \Gamma \ind{_\kappa ^{\mu \nu}}, \,
  \Gamma \ind{^\kappa _{\mu \nu}}, \qquad (\mbox{for all $\kappa \neq
  \mu,\nu$}), \\
  & \Gamma \ind{_{\kappa \mu} ^\nu} , \, \Gamma \ind{^\kappa _\nu ^\mu}, \qquad (\mbox{for all $\nu
  \neq \kappa, \mu$}), \nonumber
\end{align}
all vanish.
\end{prop}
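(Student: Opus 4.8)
The plan is to feed each of the $2^m$ basis pure spinors $\bm{\zeta}=\bm{\theta}^{\mu_1\ldots\mu_p}$ (with index set $I=\{\mu_1,\ldots,\mu_p\}$) into the integrability criterion \eqref{prspintcond}, using the spin-connection formula \eqref{spinconn} together with the explicit Clifford action \eqref{Clifact}. Recall that the maximal isotropic plane $N(\bm{\zeta})$ is spanned by the $\bm{\theta}_j$ with $j\notin I$ and the $\bm{\theta}^i$ with $i\in I$, so \eqref{prspintcond} must be checked for $\bm{X}$ running over these two families. Decompose the connection $2$-form occurring in \eqref{spinconn}, $\sum\bm{\Gamma}_{ab}\bm{\theta}^a\wedge\bm{\theta}^b$, into its three $\mathfrak{so}(V\oplus V^*)$-blocks as in \eqref{Clifact}: the $\bm{B}$-type pieces $\bm{\theta}^\mu\wedge\bm{\theta}^\nu$, the $\bm{\beta}$-type pieces $\bm{\theta}_\mu\wedge\bm{\theta}_\nu$, and the mixed $\bm{A}$-type pieces $\bm{\theta}^\mu\wedge\bm{\theta}_\nu$. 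Contracting with $\bm{X}$ and acting on $\bm{\theta}^I$ then yields $\nabla_{\bm{X}}\bm{\theta}^I$ as an explicit sum of basis elements $\bm{\theta}^J$, whose non-$\bm{\theta}^I$ part must vanish by \eqref{prspintcond}.

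The key computation is the effect of the three blocks. By \eqref{Clifact} the $\bm{B}$-block contributes $\pm\,\bm{\theta}^\mu\wedge\bm{\theta}^\nu\wedge\bm{\theta}^I$, which, when nonzero (i.e.\ $\mu,\nu\notin I$), is the strictly higher-degree element $\bm{\theta}^{I\cup\{\mu,\nu\}}\neq\bm{\theta}^I$; the $\bm{\beta}$-block contributes $\pm\,\bm{\theta}_\mu\lrcorner\bm{\theta}_\nu\lrcorner\bm{\theta}^I$, which, when nonzero (i.e.\ $\mu,\nu\in I$), is the strictly lower-degree element $\bm{\theta}^{I\setminus\{\mu,\nu\}}\neq\bm{\theta}^I$; and the $\bm{A}$-block contributes $\pm\,\bm{\theta}^\mu\wedge(\bm{\theta}_\nu\lrcorner\bm{\theta}^I)$ plus the trace term $-\tfrac{1}{2}\tr\bm{A}\,\bm{\theta}^I$ — here the diagonal part $\mu=\nu$ and the trace are multiples of $\bm{\theta}^I$ and are harmless, while the off-diagonal part ($\mu\neq\nu$, requiring $\nu\in I$, $\mu\notin I$) gives $\bm{\theta}^{(I\setminus\nu)\cup\mu}\neq\bm{\theta}^I$. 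Since distinct basis elements are linearly independent, \eqref{prspintcond} for $\bm{\theta}^I$ is equivalent to the vanishing of the coefficients of each such $\bm{\theta}^J$, i.e.\ of the relevant $\bm{B}$-, $\bm{\beta}$- and off-diagonal $\bm{A}$-components of $\bm{\Gamma}$ contracted with $\bm{X}$. Letting $\bm{\zeta}$ range over all $2^m$ basis elements and $\bm{X}$ over $N(\bm{\zeta})$: taking $\bm{X}=\bm{\theta}_\kappa$ (so $\kappa\notin I$) produces the lower-first-index conditions $\Gamma_{\kappa\mu\nu}$, $\Gamma_\kappa{}^{\mu\nu}$, $\Gamma_{\kappa\mu}{}^\nu$, while $\bm{X}=\bm{\theta}^\kappa$ (so $\kappa\in I$) produces the raised counterparts $\Gamma^{\kappa\mu\nu}$, $\Gamma^\kappa{}_{\mu\nu}$, $\Gamma^\kappa{}_\nu{}^\mu$; choosing $I=\emptyset$, $I=\{1,\ldots,m\}$, or suitable two- and three-element $I$ one checks that every component listed in \eqref{connintcond} is genuinely forced. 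Conversely, once all components in \eqref{connintcond} vanish, the only surviving contribution to $\nabla_{\bm{X}}\bm{\theta}^I$ is the diagonal-$\bm{A}$-plus-trace piece, which is a multiple of $\bm{\theta}^I$, so \eqref{prspintcond} holds and all $2^m$ basis elements are integrable.

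The main obstacle is not the computation itself — which is a direct substitution into \eqref{spinconn} and \eqref{Clifact} — but the index bookkeeping in matching exactly the ranges stated in \eqref{connintcond}. One must verify that the all-heights-equal components ($\Gamma_{\kappa\mu\nu}$, $\Gamma^{\kappa\mu\nu}$) are constrained for \emph{all} $\kappa,\mu,\nu$ (because the $\bm{X}=\bm{\theta}_\kappa$, respectively $\bm{X}=\bm{\theta}^\kappa$, choice leaves $\kappa$ unrestricted relative to $\mu,\nu$), whereas the mixed-height components ($\Gamma^\kappa{}_{\mu\nu}$, $\Gamma_\kappa{}^{\mu\nu}$, $\Gamma_{\kappa\mu}{}^\nu$, $\Gamma^\kappa{}_\nu{}^\mu$) pick up the restrictions $\kappa\neq\mu,\nu$ or $\nu\neq\kappa,\mu$ precisely because the $\bm{\theta}_a\lrcorner$ and $\bm{\theta}^a\wedge$ factors force one index into $I$ and the contracted $\bm{X}$-index out of it (or vice versa), plus the off-diagonal requirement $\mu\neq\nu$. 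One must also check that every index pattern \emph{excluded} from \eqref{connintcond} is genuinely unconstrained by integrability: either the corresponding $2$-form component is killed by antisymmetry, or the repeated index makes $\bm{\theta}_\nu\lrcorner\bm{\theta}^I=0$ or $\bm{\theta}^\mu\wedge\bm{\theta}^I=0$, or the term simply reproduces $\bm{\theta}^I$ up to scale. This case-by-case verification is routine but needs to be done carefully; everything else is mechanical.
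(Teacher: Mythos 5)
Your proposal is correct and follows essentially the same route the paper intends: the paper's (terse) justification is precisely "apply the criterion \eqref{prspintcond} to each basis element using the spin connection \eqref{spinconn} and the Clifford action \eqref{Clifact}", and your block-by-block analysis of the $\bm{B}$-, $\bm{\beta}$- and $\bm{A}$-contributions, with $\bm{X}$ ranging over $N(\bm{\theta}^{I})$, reproduces exactly the index ranges of \eqref{connintcond} (matching \eqref{int2m} and \eqref{Riccirot}). No gaps beyond the routine bookkeeping you already flag.
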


\begin{rem}
        We note that  equations \eqref{connintcond} are equivalent to equations \eqref{Riccirot} obtained by the Frobenius integrability condition.
\end{rem}

By Proposition \ref{espinpure}, we have
\begin{cor} \label{espint}
  The eigenspinors of any spin endomorphism on $\mathbb{S}$ of the
  form \eqref{stKdiag} are integrable if and only if the components of
  the connection $1$-form \eqref{connintcond} all vanish. 
\end{cor}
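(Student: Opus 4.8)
The plan is to read this off directly by combining Proposition~\ref{espinpure} with Proposition~\ref{intspinbasis}; the corollary is essentially a repackaging, so I do not expect any genuine obstacle. First I would apply Proposition~\ref{espinpure}: a spin endomorphism of the diagonal form \eqref{stKdiag} has as its eigenspinors exactly the $2^m$ canonical basis elements $\bm{1}, \bm{\theta}^{\mu_1 \ldots \mu_p}$ of $\mathbb{S} \cong \bigwedge^\bullet V^*$, each of which is pure with the maximal isotropic plane $N(\cdot)$ exhibited in the subsection on maximal isotropic planes and pure spinors. (In the generic case treated throughout --- all eigenvalues of $\bm{\phi}$ distinct --- the relevant spin eigenvalues $\tilde{\lambda}_{\mu_1 \ldots \mu_p}$ are themselves generically distinct, each eigenspace is a line spanned by a single basis element, and there is no ambiguity in the phrase ``the eigenspinors''; if eigenvalues degenerate one restricts to this canonical basis of eigenspinors, since a generic vector in a larger eigenspace need not be pure.)

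Next I would pass to the twistor bundle $\mathbb{PT}$ over $M_{\mathbb{C}}$, where each such eigenspinor is a projective pure spinor field. By the integrability criterion \eqref{prspintcond}, integrability of such a field is equivalent to the Frobenius integrability $[\D,\D]\subset\D$ of its associated maximal isotropic distribution, and Proposition~\ref{intspinbasis} states precisely that all $2^m$ of the projective basis elements $\bm{1}, \bm{\theta}^{\mu_1 \ldots \mu_p}$ are integrable if and only if the connection components \eqref{connintcond} all vanish. Since the eigenspinors of $\bm{\phi}$ are exactly this set of $2^m$ projective basis elements, the ``if and only if'' of the corollary is inherited verbatim, and nothing further is needed.

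Finally I would add one sentence of commentary tying this back to the earlier sections: applied to a (conformal) Killing--Yano tensor $\bm{\phi}$ in the normal form \eqref{normal-form}, Corollary~\ref{espint} identifies the eigenspinors of $\bm{\phi}$ with the foliating pure spinors of the $2^m$ maximal isotropic distributions of Theorem~\ref{eventhm}, and the vanishing conditions \eqref{connintcond} are exactly those shown there (equations \eqref{int2m}) to follow from the conformal Killing--Yano equations together with \eqref{tauintcond}. Thus the spinorial statement and Theorem~\ref{eventhm} are two faces of the same computation, and the only real ``work'' has already been carried out upstream.
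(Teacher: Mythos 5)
Your proposal is correct and follows exactly the paper's route: the corollary is obtained by combining Proposition~\ref{espinpure} (the eigenspinors of a $2$-form in the diagonal form \eqref{stKdiag} are precisely the $2^m$ pure basis elements of $\mathbb{S}\cong\bigwedge^\bullet V^*$) with Proposition~\ref{intspinbasis} (those basis elements are all integrable iff the connection components \eqref{connintcond} vanish). Your extra remarks on eigenvalue genericity and the link back to Theorem~\ref{eventhm} are harmless elaborations of what the paper leaves implicit; no gap.
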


We can then reformulate Theorem \ref{eventhm} as follows.
\begin{thm} \label{eventhmspin}
Let $M$ be a $2m$-dimensional spin manifold equipped with a conformal Killing-Yano tensor $\bm{\phi}$ as in Theorem \ref{eventhm}. Then the $2^m$ eigenspinors of $\bm{\phi}$ are integrable.
\end{thm}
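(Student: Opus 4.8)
The plan is to combine Proposition~\ref{espinpure} with Corollary~\ref{espint} and the connection analysis already carried out in the proof of Theorem~\ref{eventhm}. First I would recall that by Proposition~\ref{espinpure}, since $\bm{\phi}$ is diagonal of the form \eqref{stKdiag} in the null basis $\{\bm{\theta}^\mu,\bm{\theta}_\mu\}$ in which it takes normal form \eqref{normal-form}, its eigenspinors regarded as an endomorphism on $\mathbb{S}$ are precisely the $2^m$ canonical basis spinors $\bm{1}$, $\bm{\theta}^{\mu_1\ldots\mu_p}$ of $\mathbb{S}\cong\bigwedge^\bullet V^*$, and each is pure with associated maximal isotropic plane as listed. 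These are exactly the $2^m$ maximal isotropic distributions associated to $\bm{\phi}$ in the sense of Section~3.1: choosing the subset $\{\mu_1,\ldots,\mu_p\}$ corresponds to annihilating $\bm{\theta}^{\mu_1},\ldots,\bm{\theta}^{\mu_p}$ together with the complementary $\bm{\theta}_{\nu}$'s.

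Next I would invoke Corollary~\ref{espint}: the eigenspinors of a spin endomorphism of the form \eqref{stKdiag} are (all simultaneously) integrable if and only if the connection components \eqref{connintcond} vanish. By the remark following Proposition~\ref{intspinbasis}, these are the same conditions as \eqref{Riccirot}, equivalently \eqref{int2m}. But the proof of Theorem~\ref{eventhm} established precisely that, under the hypotheses that $\bm{\phi}$ is a non-degenerate diagonalisable conformal Killing-Yano tensor with distinct eigenvalues and that $\bm{\tau}=\dd\bm{\phi}$ satisfies \eqref{tauintcond}, the connection components \eqref{int2m} all vanish. Hence the hypotheses of Corollary~\ref{espint} are met, and all $2^m$ eigenspinors of $\bm{\phi}$ are integrable.

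The only genuine point to check is that the endomorphism $\hat{\bm{\phi}}$ of $\mathbb{S}$ induced via Clifford multiplication by the $2$-form $\bm{\phi}$ is indeed of the form \eqref{stKdiag} with the $\lambda_\mu$ of the normal form \eqref{normal-form} — i.e.\ that the ``eigenvalues of $\bm{\phi}$ as a $2$-form endomorphism'' and ``as a spin endomorphism'' are matched as in Proposition~\ref{espinpure}. This is immediate from the Clifford action formulas \eqref{Clifact}: writing $\bm{\phi}=\sum_\mu\lambda_\mu\bm{\theta}^\mu\wedge\bm{\theta}_\mu$, each term $\bm{\theta}^\mu\wedge\bm{\theta}_\mu$ acts on $\bigwedge^\bullet V^*$ as $\lambda_\mu$ times the operator $\bm{\theta}^\mu\wedge(\bm{\theta}_\mu\lrcorner\,\cdot\,)-\tfrac12$, which is diagonal in the basis $\{\bm{1},\bm{\theta}^{\mu_1\ldots\mu_p}\}$ with the eigenvalue stated in Proposition~\ref{espinpure}. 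So no new computation is required.

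I do not expect a serious obstacle here; the statement is essentially a repackaging of Theorem~\ref{eventhm} in spinor language, and the substantive analytic work (solving the conformal Killing-Yano equations in the null frame to force \eqref{int2m}) was already done. The one place to be slightly careful is bookkeeping: making sure the correspondence $\{\mu_1,\ldots,\mu_p\}\leftrightarrow$ choice of distribution is stated so that ``the $2^m$ eigenspinors'' and ``the $2^m$ maximal isotropic distributions associated to $\bm{\phi}$'' are identified, and noting (as in Theorem~\ref{eventhm}) that in Lorentzian signature the reality conditions collapse these into two genuine congruences with $2^{m-1}$ eigenspinors each. A one- or two-sentence proof citing Proposition~\ref{espinpure}, Corollary~\ref{espint}, and the proof of Theorem~\ref{eventhm} suffices.
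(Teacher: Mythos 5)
Your proposal is correct and follows exactly the route the paper intends: Theorem~\ref{eventhmspin} is presented as a spinorial reformulation of Theorem~\ref{eventhm}, obtained by combining Proposition~\ref{espinpure} (the eigenspinors are the pure basis spinors), Corollary~\ref{espint} (integrability of these is equivalent to the vanishing of the connection components \eqref{connintcond}, which coincide with \eqref{int2m}), and the connection analysis already carried out in the proof of Theorem~\ref{eventhm}. No gap; your extra check via \eqref{Clifact} is just the content of Proposition~\ref{espinpure}.
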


\subsection{Weyl curvature restrictions revisited}

We can also give a spinorial articulation of Theorem \ref{TWeyl} in the same vein as \cite{Penrose1986}.
Denote by $\bm{\Psi}$ and by $\bm{\Psi}^{\pm}$ the completely traceless elements of $\bigodot ^2 \End(\mathbb{S})$ and $\bigodot ^2 \End(\mathbb{S}^{\pm})$ corresponding to the Weyl tensor $\bm{C}$ viewed as an element of $\bigodot ^2 \mathfrak{so}(n)$. In spin components,
\begin{align*}
        {\Psi^+} \ind*{_{A C} ^{B D}} \equiv \hat{\gamma} \ind{^{a} _A ^{E'}} \check{\gamma} \ind{^{b} _{E'} ^B} C \ind{_{a b c d}} \hat{\gamma} \ind{^{c} _C ^{F'}} \check{\gamma} \ind{^{d} _{F'} ^D} \quad \mbox{and} \quad
        {\Psi^-} \ind*{_{A' C'} ^{B' D'}} \equiv \check{\gamma} \ind{^{a} _{A'} ^{E}} \hat{\gamma} \ind{^{b} _{E} ^{B'}} C \ind{_{a b c d}} \check{\gamma} \ind{^{c} _{C'} ^{F}} \hat{\gamma} \ind{^{d} _{F} ^{D'}},
\end{align*}
where we have used the summation convention.
Theorem \ref{TWeyl} can then be reformulated in spinorial terms:
\begin{thm} \label{WeylspinDTh}
        Let $\bm{\phi}$ be a non-degenerate conformal Killing-Yano tensor
        with distinct eigenvalues diagonal in the null basis
        $\left\{ \bm{\theta} _a \right\}$. Then each of the
        eigenspinors $\bm{\theta}_{A}$, $\bm{\theta}_{A'}$ ($A, A'=1,
        \ldots, 2^{m-1}$) of the corresponding spin endomorphisms
        satisfies (no summation)
\begin{align}  \label{WeylspinD}
        \bm{\Psi}^+ \left( \bm{\theta}_A, \bm{\theta}_A \right) \wedge
        \bm{\theta}_A = 0 \qquad \mbox{and} \qquad \bm{\Psi}^- \left(
          \bm{\theta}_{A'}, \bm{\theta}_{A'} \right) \wedge
        \bm{\theta}_{A'} = 0. 
\end{align}
\end{thm}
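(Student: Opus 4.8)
The plan is to derive Theorem~\ref{WeylspinDTh} from Theorem~\ref{TWeyl} by a purely algebraic translation into the spin representation: no new geometric input is needed beyond the curvature restriction already proved, only an unwinding of the definition of $\bm{\Psi}^{\pm}$ together with the explicit Clifford action \eqref{Clifact}. First I would recall that, by Proposition~\ref{espinpure}, the spin endomorphism attached to $\bm{\phi}$ is diagonal on $\mathbb{S}$ with the $2^m$ pure basis spinors $\left\{ \bm{1}, \bm{\theta}^{\mu_1 \ldots \mu_p}\right\}$ as eigenspinors; these are exactly the $\bm{\theta}_A,\bm{\theta}_{A'}$ in the statement. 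The rescaling and interchange freedom of Lemma~\ref{normform} visibly preserves both the diagonal form \eqref{stKdiag} of $\bm{\phi}$ and the list of components permitted to survive in Theorem~\ref{TWeyl}, and it acts transitively on the basis spinors; hence it suffices to establish \eqref{WeylspinD} for a single representative, say $\bm{\theta}_A=\bm{1}$, the general case following by permuting and interchanging the pairs $(\bm{\theta}^\mu,\bm{\theta}_\mu)$. In the exterior-algebra model $\mathbb{S}\cong\bigwedge^{\bullet}V^*$ this reduces the claim to showing that $\bm{\Psi}^+(\bm{1},\bm{1})$ lies in $\ker(\bm{1}\wedge\,\cdot\,)$, i.e.\ is a scalar multiple of $\bm{1}$.

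Next I would expand $\bm{\Psi}^+$ in the null basis using the contraction formula for $\Psi^+$ in terms of the $\hat{\gamma}$- and $\check{\gamma}$-matrices recorded just above the theorem, writing $\bm{C}=\sum C_{abcd}\,\bm{\theta}^{ab}\odot\bm{\theta}^{cd}$. By Theorem~\ref{TWeyl} every contribution drops out except those built from the components $C \ind{_{\mu \nu} ^{\mu \nu}}$, $C \ind{_\mu ^\nu _\nu ^\mu}$, $C \ind{_\mu ^\mu _\nu ^\nu}$ and $C \ind{_\mu ^\mu _\mu ^\mu}$ (together with $C \ind{_\mu _0 ^\mu ^0}$ in odd dimensions). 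One now reads off from \eqref{Clifact} how the associated $2$-forms act on $\bm{1}$: the $\bm{A}$-type Cartan generators $\bm{\theta}^\mu\wedge\bm{\theta}_\mu$ act as scalars on $\bm{1}$, while the pairs $(\bm{\theta}^\mu\wedge\bm{\theta}^\nu,\ \bm{\theta}_\mu\wedge\bm{\theta}_\nu)$ and $(\bm{\theta}^\mu\wedge\bm{\theta}_\nu,\ \bm{\theta}^\nu\wedge\bm{\theta}_\mu)$ enter $\bm{\Psi}^+$ as unordered (symmetrised) pairs, so the ``raising'' factor applied to $\bm{1}$ is matched against the ``lowering'' factor and the composite again returns a multiple of $\bm{1}$. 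Summing the scalars gives $\bm{\Psi}^+(\bm{1},\bm{1})=c\,\bm{1}$, whence $\bm{\Psi}^+(\bm{1},\bm{1})\wedge\bm{1}=0$; the statement for $\bm{\Psi}^-$ and the odd basis spinors $\bm{\theta}_{A'}$ follows verbatim with $\hat{\gamma}\leftrightarrow\check{\gamma}$, or simply by applying the even-chirality result to the opposite orientation.

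The main obstacle is this second step: keeping the combinatorics of the Clifford action on a general basis spinor straight, and in particular checking that the \emph{mixed} surviving components (such as $C \ind{_{\mu \nu} ^{\mu \nu}}$, which couples the $\bm{B}$-type form $\bm{\theta}^\mu\wedge\bm{\theta}^\nu$ to the $\bm{\beta}$-type form $\bm{\theta}_\mu\wedge\bm{\theta}_\nu$) recombine after the double sandwiching into a term proportional to $\bm{\theta}_A$, rather than leaving spurious pieces of other degrees. A cleaner but more abstract alternative, which I would develop in parallel, is to avoid the reduction to $\bm{1}$ altogether and argue from \eqref{WTws2} directly: since $\hat{\bm{C}}$ commutes with $\hat{\bm{\phi}}$ on $\bigwedge^2$, the induced operator $\bm{\Psi}^+$ on $\mathbb{S}^+\otimes\mathbb{S}^+$ commutes with $\hat{\bm{\phi}}\otimes\mathbf{I}+\mathbf{I}\otimes\hat{\bm{\phi}}$, hence preserves the eigenspace decomposition supplied by the distinct eigenvalues of $\bm{\phi}$ on $\mathbb{S}$; feeding $\bm{\theta}_A$ into the appropriate slots then forces $\bm{\Psi}^+(\bm{\theta}_A,\bm{\theta}_A)$ into the $\bm{\theta}_A$-eigenline, which is \eqref{WeylspinD}. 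Making the passage from ``$[\hat{\bm{C}},\hat{\bm{\phi}}]=0$ on $\bigwedge^2$'' to the commutation statement on $\mathbb{S}^{\pm}\otimes\mathbb{S}^{\pm}$ precise, via the standard embedding $\bigwedge^2(V\oplus V^*)\hookrightarrow\mathbb{S}\otimes\mathbb{S}$, is then the one remaining technical point.
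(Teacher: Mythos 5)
Your first route is, in substance, the paper's own argument: the paper gives no separate proof of Theorem \ref{WeylspinDTh}, presenting it as the direct spinorial transcription of Theorem \ref{TWeyl}, and your unwinding of the definition of $\bm{\Psi}^\pm$ via the Clifford action \eqref{Clifact} on the basis spinors of Proposition \ref{espinpure} is exactly that transcription. The combinatorial worry you flag in fact dissolves, and no reduction to $\bm{\theta}_A=\bm{1}$ is needed: since $\bm{\Psi}^\pm(\bm{\theta}_A,\bm{\theta}_A)$ feeds $\bm{\theta}_A$ into the two endomorphism slots \emph{separately} (the output lives in $\mathbb{S}^\pm\odot\mathbb{S}^\pm$, and the target is proportionality to $\bm{\theta}_A\odot\bm{\theta}_A$), each surviving non-Cartan component of Theorem \ref{TWeyl} couples a $2$-form that shifts $\bm{\theta}_A$ with one that annihilates it --- e.g.\ $C\ind{_{\mu\nu}^{\mu\nu}}$ couples $\bm{\theta}^\mu\wedge\bm{\theta}^\nu$ with $\bm{\theta}_\mu\wedge\bm{\theta}_\nu$, and no basis spinor can simultaneously contain and omit both $\mu$ and $\nu$; likewise for the pair $\bm{\theta}^\mu\wedge\bm{\theta}_\nu$, $\bm{\theta}^\nu\wedge\bm{\theta}_\mu$ --- so only the Cartan elements $\bm{\theta}^\mu\wedge\bm{\theta}_\mu$, which act as scalars, contribute, and \eqref{WeylspinD} follows for every basis spinor at once, with nothing to ``recombine''.

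Your parallel alternative via \eqref{WTws2}, however, does not close as stated, and the obstruction is not the passage to $\mathbb{S}^+\otimes\mathbb{S}^+$ (which is unproblematic) but the spectral degeneracy afterwards: commutation with $\hat{\bm{\phi}}\otimes\mathbf{I}+\mathbf{I}\otimes\hat{\bm{\phi}}$ only confines $\bm{\Psi}^+(\bm{\theta}_A,\bm{\theta}_A)$ to the full eigenspace of eigenvalue $2\tilde{\lambda}_A$, and under the stated hypothesis (only the $\pm\lambda_\mu$ distinct) this eigenspace can be strictly larger than the line through $\bm{\theta}_A\otimes\bm{\theta}_A$, because distinct eigenvalues do not preclude coincidences among their sums. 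For instance, with $m=4$ and $\lambda_\mu=\ii x_\mu$, $(x_1,x_2,x_3,x_4)=(5,1,3,2)$, one has $\tilde{\lambda}_{12}+\tilde{\lambda}_{13}=2\tilde{\lambda}_{14}$ even though all eigenvalues are distinct, so $\bm{\theta}^{12}\otimes\bm{\theta}^{13}+\bm{\theta}^{13}\otimes\bm{\theta}^{12}$ sits in the same eigenspace as $\bm{\theta}^{14}\otimes\bm{\theta}^{14}$ and the commutation argument alone cannot exclude such a contribution. So keep the first route as the proof (as the paper implicitly does) and treat the second only as motivation, or else add a genericity assumption on sums of eigenvalues.
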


In four dimensions, each spin space $\mathbb{S}^\pm$ is a
two-dimensional complex vector space equipped with a symplectic inner
product, so that any symmetric spinor of valence $p$ is fully
decomposable as a symmetric product of $p$ spinors of valence $1$. In
particular, the $\bm{\Psi}^\pm$ admit such a decomposition, and
equations \eqref{WeylspinD} are equivalent to
%\footnote{In abstract
%  index notation, this expression is equivalent to ${\Psi^+} \ind{_{A
%      B C D}} = \Psi_2 \theta \ind*{^1_{\lp{A}}} \theta  \ind*{^1_B}
%  \theta  \ind*{^2_C} \theta \ind*{^2_{\rp{D}}}$.} 
\begin{align*}
        {\bm{\Psi}^+} = \Psi_2 \bm{\theta} ^1 \odot \bm{\theta} ^1
        \odot \bm{\theta}  ^2 \odot \bm{\theta} ^2, 
\end{align*}
where $\Psi_2 \equiv \Psi_{1122}$, and similarly for $\bm{\Psi}^-$.
By the Goldberg-Sachs theorem, it follows that each of the spinors
$\bm{\theta}_A$, $\bm{\theta}_{A'}$, ($A, A' =1,2$), satisfies the
integrability condition \eqref{prspintcond}.

\begin{rem}
A possible
classification of  $\bm{\Psi}^\pm$ in six dimensions and its relation
with integrable spinors were first investigated in
\cites{Jeffryes1995,Hughston1995,Mason1995}. 
\end{rem}

\subsection{Spin bundle over odd-dimensional manifolds}
Let $M$ be a $(2m+1)$-dimensional Riemannian spin manifold. Then, the complexification $T_\mathbb{C} M$ of the tangent bundle admits a splitting $V \oplus V^* \oplus K$, where $V$ and $V^*$ are $m$-dimensional vector bundles dual (and conjugate) to each other, and $K$ is a complex line bundle. The spin bundle $\mathbb{S}$ over $M$ is now
irreducible and isomorphic to $\bigwedge V^*$. Maximal isotropic distributions of $V \oplus V^*$ correspond
to sections of the projective twistor (or pure spinor) bundle $\mathbb{PT}$ over
$M$, or equivalently, to orthogonal almost complex structures on $V
\oplus V^*$. When such an isotropic distribution is integrable, the
corresponding section of $\mathbb{PT}$ is holomorphic, and the corresponding almost
CR structure of $M$ integrable. The eigenspinors of a conformal Killing-Yano tensor $\bm{\phi}$ are precisely the basis
elements of the spin bundle $\mathbb{S}$ induced from the basis of
$V^*$. Now, assuming that $\bm{\phi}$ has distinct eigenvalues, Theorem \ref{eventhmspin} extends naturally to odd-dimensional manifolds.

\section{Concluding remarks and applications}

\subsection{Intersection of foliations and reality conditions}
To obtain similar results on a real pseudo-Riemannian manifold $M$, it
suffices to impose suitable reality conditions on the complexified
tangent bundle $T_\mathbb{C} M$ . Given a real manifold
equipped with a pseudo-Riemannian metric of signature $s$, the
intersection of an integrable maximal isotropic distribution $\D$ and
its (integrable) conjugate $\bar{\D}$ gives rise \cite{Hughston1988}
to an integrable real isotropic distribution $ K=\D\cap\bar\D$ whose
rank can be any of $(2m-|s|)/2$ modulo 2 (in
\cite{Hughston1988} it was claimed in error that the rank is always
$(2m-|s|)/2$).  As we have shown earlier, in the complexification, the
integral surfaces of $\D$ are totally geodesic, and so therefore are
those of $K$.  In the case of Lorentzian manifolds, where $|s|=2m-2$,
the foliation $K$ is 1-dimensional and tangent to a congruence of null
geodesics and the \emph{screen space} $K^\perp/K$ of $K$ inherits the
complex structure on $T_\mathbb{C} M$ from $\D$ that is Lie
derived along the congruence 
\cite{Hughston1988,Nurowski2002}. In four dimensions, the preservation
fo the complex structure on the screen space is equivalent to 
the \emph{shear-free} condition,
i.e., the preservation of the \emph{conformal} structure of $K^\perp/K$ along
$K$. This is a consequence of the fact that complex structures
and conformal metrics on a surface are the same.
However, this is not true in higher dimensions, and a six-dimensional
counter-example invalidating this equivalence between being shear-free
and preserving a complex structure in higher dimensions is given in
\cite{Trautman2002a}.

Spinorially, the real structure of $M$ induces a complex conjugation
$\mathcal{C}$ on the spin bundle, which preserves each of the chiral
spin bundles when $s/2$ is even, and interchanges them when $s/2$ is
odd. Depending on $s$, $\mathcal{C}$ may be quaternionic
\cite{Penrose1986}, i.e. $\mathcal{C}^2=-1$. In the Lorentzian case, a
real vector field $\bm{k}$ as a section of the isotropic line bundle
$K$ as defined above can then be expressed as\footnote{This is
  \emph{always} possible and guaranteed by the various properties of
  the spin inner product and the conjugation in different dimensions
  and signatures.} 
\begin{align} \label{PND}
 \bm{k}= < \bar{\bm{\zeta}}, \bm{e}_a \gamma ^a \bm{\zeta} >,
\end{align}
where $\bm{\zeta}$ is an integrable pure spinor, and
$\bar{\bm{\zeta}}$ its conjugate under $\mathcal{C}$. In dimensions greater than six, the choice of the spinor $\bm{\zeta}$  for $\bm{k}$ is no longer unique.
A more general treatment of real structures and spinors is given in
\cite{Kopczy'nski1992}.  

\subsection{The Kerr-Schild ansatz}
The Kerr-NUT-(A)dS metric given in \cite{Chen2006} has a long
history that originates in the four-dimensional Kerr-Schild
ansatz found in \cite{Kerr1963} in 1963. The original aim of Kerr's
paper was to construct a solution to Einstein's equations which is of
Petrov type D with a twisting (i.e. hypersurface orthogonal) GPND
$\bm{k}$. It turns out that the newly-found metric is an \emph{exact}
first-order perturbation of the flat metric, and describes a rotating
black hole of mass $M$: 
\begin{align} \label{KerrSchild}
\bm{g} = \tilde{\bm{g}} + \frac{2 M}{U} \left( \bm{k} \right)^2.
\end{align}
where $\tilde{\bm{g}}$ is the Minkowski background metric, $U$ some
function, and $\bm{k}$ is a shear-free isotropic geodesic (real)
vector field with respect to both $\bm{g}$ and $\tilde{\bm{g}}$. Since
$\bm{k}$ is shear-free, it belongs to a maximal isotropic integrable
distribution of complexified Minkowski spacetime, and thus has the
form \eqref{PND}. 

The Kerr-Schild ansatz \eqref{KerrSchild} has been generalised in
higher dimensions in Lorentzian signature $(1,2m-1)$ in
\cite{Gibbons2005}. The background metric $\tilde{\bm{g}}$ is now
allowed to be a pure (A)dS $2m$-dimensional metric. The vector field
$\bm{k}$ is again isotropic and geodesic with respect to both $\bm{g}$
and $\tilde{\bm{g}}$. However, as shown in \cite{Pravda2004}, it fails
to retain its shear-free property in dimensions greater than four.
There does, however, remain the question of whether $\bm{k}$ arises
from one of our integrable maximal isotropic distributions (i.e., pure
spinors). For the Kerr-NUT-(A)dS
metric one can follow the various coordinates transformations leading from
the Kerr-Schild ansatz to the Kerr-NUT-(A)dS metric in
\cites{Gibbons2005,Chen2006}, and we find that the \emph{real} vector
$\bm{k}$ of the former is the same (up to factor) as one of the
\emph{complex} basis vectors $\left\{\bm{\theta}_\mu, \bm{\theta}^\mu
\right\}$ of the latter. By Theorem \ref{eventhm}, the eigenspinors of
the closed conformal Killing-Yano tensor \eqref{TwKNA} on the
Kerr-NUT-(A)dS metric are integrable, so that each $\bm{\theta}_{\mu}$
(and each $\bm{\theta}^{\mu}$) will belong to (an intersection of)
some integrable maximal isotropic distributions, and we can write
\begin{align*}
 \bm{\theta}_\mu= < \bm{\eta}, \bm{\theta}_a \gamma ^a \bm{\zeta} >,
\end{align*}
for some integrable (pure) basis spinors $\bm{\zeta}$ and $\bm{\eta}$. Again, there is some freedom in the choice of spinors.
Since there is no fundamental difference between the complexified Kerr-Schild metric and Kerr-NUT-(A)dS metric, it follows that by imposing a suitable reality condition, $\bm{k}$ will have the form \eqref{PND} in all (even) dimensions.

\begin{rem}
It is shown  in \cite{Chen2008} how one can obtain an $m$-Kerr-Schild ansatz in split signature $(m,m)$ from the Kerr-NUT-(A)dS metric by Wick rotating the coordinates $\psi_k$. After some work, we have
\begin{align*}
        \bm{g} = \tilde{\bm{g}} - \sum_{\mu=1} ^{m} \frac{2 b_\mu x_\mu}{U_\mu} \left( \bm{k}_{(\mu)} \right)^2,
\end{align*}
where $\tilde{\bm{g}}$ is the background pure (A)dS metric, the $b_\mu$ are the mass and the NUT parameters, and the functions $U_\mu$ and coordinates $x_\mu$ are as for the Kerr-NUT-(A)dS metric. The $m$ real vectors $\bm{k}_{(\mu)}$ are linearly independent, mutually orthogonal, isotropic and geodesic with respect to both $\bm{g}$ and $\tilde{\bm{g}}$. Thus, they span a \emph{real} maximal isotropic distribution. In fact, all $2^m$ maximal isotopic distributions arising from the closed conformal Killing-Yano tensors are real. The spin bundles over $M$ are also real, and, using the same argument as above, each $\bm{k}_{(\mu)}$ can be expressed as
\begin{align*}
 \bm{k}_{(\mu)}= < \bm{\eta}_{(\mu)}, \bm{e}_a \gamma ^a \bm{\zeta}_{(\mu)} >,
\end{align*}
for some appropriate choice of spinors $\bm{\zeta}_{(\mu)}$ and $\bm{\eta}_{(\mu)}$.
\end{rem}

The odd-dimensional versions of the above metrics are similar and share the same properties as their even-dimensional counterparts.

\subsection{The Kerr Theorem}
The Kerr Theorem provides a systematic method of finding shear-free null
geodesic vector fields arising from an integrable almost complex
structure in complexified Minkowski spacetime.  
The original theorem consists in solving $F=0$ where $F$ is a certain
holomorphic function of the \emph{complexified} isotropic flat
coordinates \cites{Kerr1963,Cox1976}. Penrose gave
the Kerr theorem a new and more geometric formulation by realizing $F$
as a function on twistor space in his original paper on twistor
geometry \cite{Penrose1967}. A 
generalisation to higher dimensions was given in \cite{Hughston1988}.
Essentially, it states that a pure spinor field on a
(complexified) flat $2m$-dimensional manifold $M$ equipped with a
conformal metric is integrable if and only if it can be determined by
the intersection of an $m$-dimensional analytic surface and the set of
projective pure spinor spaces in twistor space representing a region of
$M$. This surface is defined by $m(m-1)/2$ homogeneous holomorphic
functions on twistor space. In the context of the four-dimensional
type D, e.g., the Lorentzian Kerr-NUT metric, the integrable spinor field is
determined by a single quadratic function constructed from the angular
momentum twistor. In higher dimensions, the co-dimension and hence the
number of such functions
increases quadratically with the dimension (being the dimension of the
space of pure spinors), and a characterization of the structure has
yet to emerge.

\subsection{Degenerate conformal Killing $2$-forms and conformal Killing spinors}
As pointed out above, a degenerate conformal Killing-Yano tensor on a
four-dimensional Lorentzian manifold implies that the Weyl tensor is
of Petrov type N. By the Goldberg-Sachs theorem, this spinor is
integrable. In fact, on an $n$-dimensional Riemannian manifold, such a conformal Killing-Yano tensor arises as
the `squaring' of a \emph{conformal Killing spinor} or \emph{twistor
  spinor}, i.e. a spinor $\bm{\zeta}$ which satisfies the twistor
equation  
\begin{align*}
        \nabla_{\bm{X}} \bm{\zeta}+ \frac{1}{n} \bm{X} \cdot \Dop \bm{\zeta}& = 0,
\end{align*}
for all vector fields $\bm{X}$, where $\Dop$ is the Dirac operator \cite{Baum2000,Semmelmann2003}. When $\bm{\zeta}$ satisfies the Dirac equation $\Dop \bm{\zeta} = \lambda \bm{\zeta}$ for some function $\lambda$, $\bm{\zeta}$ is called a \emph{Killing spinor}. In the special case where $\lambda \equiv 0$, $\bm{\zeta}$ is a \emph{parallel spinor}. Clearly, a pure Killing spinor automatically satifies the integrability condition \eqref{prspintcond}. A conformal Killing spinor must also satisfy the integrablity condition
\begin{align} \label{typeN}
\bm{C}(\bm{X},\bm{Y}) \cdot \bm{\zeta} & = 0,
\end{align}
for all vector fields $\bm{X}$, $\bm{Y}$.

One can show \cite{Semmelmann2003,Cariglia2004} that given two conformal Killing spinors $\bm{\zeta}$ and $\bm{\eta}$, the $2$-form defined by
\begin{align*}
        \bm{\phi} = <\bm{\eta}, \bm{\theta} \ind{^{ab}} \gamma \ind{_{ab}} \bm{\zeta} >
\end{align*}
is a conformal Killing-Yano tensor\footnote{On Lorentzian four-dimensional spacetimes, equation \eqref{typeN} is the Petrov type N condition, and the conformal Killing-Yano tensor is obtained by taking $\bm{\zeta}$ pure together with its complex conjugate $\bm{\eta}=\bar{\bm{\zeta}}$, or equivalently a real (`Majorama') Dirac spinor. These are pp-wave spacetimes.}. Conformal Killing spinors have been extensively studied, and we refer the reader to the literature (e.g. \cite{Baum2000} and references therein) for details.

\bibliographystyle{plain}
\bibliography{CKYArxivSubm2}

\end{document}